\def\Im {\mathop{\rm Im}\nolimits}
\def\arg {\mathop{\rm arg}\nolimits}
\def\Ai {{\rm Ai}}
\def\tr {{\rm tr}}
\newtheorem{pro}{{\sc{Proposition}}}
\newtheorem{rem}{{\sc{Remark}}}
\newtheorem{lem}{{\sc{Lemma}}}
\newtheorem{cor}{{\sc{Corollary}}}
\newtheorem{thm}{{\sc{Theorem}}}
\numberwithin{equation}{section}
\begin{document}

\title{{ Gaussian unitary ensemble with jump discontinuities and the coupled Painlev\'e II and IV systems
   }}

\author{Xiao-Bo Wu\footnotemark[1] ~and Shuai-Xia Xu\footnotemark[2] ~}
\renewcommand{\thefootnote}{\fnsymbol{footnote}}
\footnotetext[1]{School of Mathematics and Computer Science, Shangrao Normal University, Shangrao  334001, China. E-mail: \texttt{wuxiaobo201207@163.com}}
\footnotetext[2]{Institut Franco-Chinois de l'Energie Nucl\'{e}aire, Sun Yat-sen University,
Guangzhou 510275, China. E-mail: \texttt{xushx3@mail.sysu.edu.cn}}

\date{}

\maketitle

\noindent \hrule width 6.27in \vskip .3cm

\noindent {\bf{Abstract }} We study the orthogonal polynomials and the Hankel determinants associated with Gaussian weight with two jump
discontinuities. When the degree $n$ is finite, the orthogonal polynomials and the  Hankel determinants are shown to be connected to the coupled Painlev\'e IV system. In the double scaling limit as the jump discontinuities tend to the edge of the spectrum and the degree $n$ grows to infinity, we establish the asymptotic expansions for the Hankel determinants and the orthogonal polynomials, which are expressed in terms of solutions of the coupled Painlev\'{e} II system.  As applications, we re-derive the recently found Tracy-Widom type expressions for the gap probability of there being no eigenvalues in a finite interval near the the extreme eigenvalue of large Gaussian unitary ensemble and the limiting conditional distribution of the  largest eigenvalue in  Gaussian unitary ensemble by considering a thinned process.

\vskip .5cm
\noindent {\it{2010 Mathematics Subject Classification:}} 33E17; 34M55; 41A60

\vspace{.2in}

\noindent {\it{Keywords and phrases:}} Random matrices;  Gaussian unitary ensemble; Tracy-Widom distribution; Painlev\'{e}  equations; Hankel determinants;  orthogonal polynomials;  Riemann-Hilbert approach.

\vskip .3cm

\noindent \hrule width 6.27in\vskip 1.3cm

\tableofcontents

\section{Introduction and statement of results}

Consider the Gaussian Unitary Ensemble (GUE), where  the joint probability density function of the eigenvalues is given by
\begin{equation}\label{GUE}\rho_n(\lambda_1,...,\lambda_n)=\frac{1}{Z_{n}}\prod_{i=1}^{n}e^{-\lambda_i^2}\prod_{1\leq i<j\leq n}^{n}|\lambda_i-\lambda_j|^2; \end{equation}
see \cite{ Forrester, Metha}. Here  $Z_{n}$, known as the partition function, is a normalization constant.
It is well known that  the density function \eqref{GUE} can be expressed in the determinantal form
\begin{equation}\label{determiantal form} \rho_n(\lambda_1,...,\lambda_n)=\det[K_n(\lambda_i,\lambda_j)]_{1\leq i,j\leq n},\end{equation}
where
\begin{equation}\label{Weighted OP kernel}
K_n(x,y)=e^{-\frac{1}{2}(x^2+y^2)}\sum_{k=0}^{n-1}H_k(x)H_k(y).
\end{equation}
The polynomial $H_k(x)$ therein is the $k$-th degree  monic orthogonal polynomial with respect to the Gaussian weight $e^{-x^2}$, which is  the Hermite polynomial except for a constant \cite[Eq. (18.5.13)]{O}.

Introduce the Hankel determinant
\begin{align}\label{def: Hankel}
D_n(s_1,s_2; \omega_1,\omega_2)&=\det\left(\int_{\mathtt{R}} x^{j+k} w(x;s_1,s_2;\omega_1,\omega_2)dx\right)_{j,k=0}^{n-1}\nonumber \\
&=\frac{1}{n!}\int_{-\infty}^{\infty}\cdots \int_{-\infty}^{\infty} \prod_{i=1}^{n}w(x_j;s_1,s_2;\omega_1,\omega_2)\prod_{1\leq i<j\leq n}^{n}(x_i-x_j)^2dx_1 \cdots dx_n,
                                                       \end{align}
where
\begin{equation}\label{weight}
w(x;s_1,s_2;\omega_1,\omega_2)=e^{-x^2}\left\{\begin{array}{cc}
                                                          1& x< s_1,\\
                                                           \omega_1& s_1<x<s_2,\\
                                                        \omega_2& x>s_2,
                                                       \end{array}
                                                       \right.
                                                       \end{equation}
with the constant $ \omega_k\geqslant0, k=1,2$.
If $\omega_1=\omega_2=1$, the Hankel determinant $D_n(s_1,s_2;1,1)$ is corresponding to the pure Gaussian weight $e^{-x^2}$ and can be evaluated  explicitly
 \begin{equation}\label{def: Hankel-GUE}
D_n^{\texttt{GUE}}=D_n(s_1,s_2;1,1)=(2\pi)^{n/2}2^{-n^2/2}\prod_{k=1}^{n-1}k! ~;
                                                       \end{equation}
 see \cite[Equation (4.1.5)]{Metha}.
There is a system of  monic orthogonal polynomials $\pi_n(x)=\pi_n(x;s_1,s_2)=x^n+\cdots$,
 orthogonal with respect to the weight function $w(x)=w(x;s_1,s_2;\omega_1,\omega_2)$,
 \begin{equation}\label{def: Orth}
         \int_{\mathbb{R}}\pi_m(x)\pi_n(x) w(x)dx  =  \gamma_n^{-2}\delta_{m,n}, \quad m, n\in\mathbb{N}.
          \end{equation}
           The orthogonal polynomials satisfy the three term recurrence relation
   \begin{equation}\label{def: rec}
       z\pi_n(z)=\pi_{n+1}(z)+   \alpha_n\pi_{n}(z)+\beta_n^2\pi_{n-1}(z),
          \end{equation}
   where $\alpha_n=\alpha_n(s_1,s_2)$    and $\beta_n=\beta_n(s_1,s_2)$ are the recurrence coefficients.
The polynomials $\gamma_{n}\pi_n(x)$ are the normalized orthogonal polynomials and the leading coefficients $\gamma_{n}=\gamma_n(s_1,s_2)$ are connected to the Hankel determinant $D_n(s_1,s_2)=D_n(s_1,s_2; \omega_1,\omega_2)$ by
\begin{equation}\label{eq:DGamma}D_n(s_1,s_2)=\prod_{j=0}^{n-1}\gamma_j^{-2}(s_1,s_2).\end{equation}

 Consider the gap probability that there is no eigenvalues  in the finite interval $(s_1,s_2)$ for the GUE matrices.  On account of \eqref{GUE} and \eqref{def: Hankel}, the gap probability can be expressed as a ratio of the Hankel determinants
\begin{equation}\label{def: Gap-pro}
\texttt{Pro}(\lambda_j\not\in(s_1,s_2): j=1...n)=\frac{D_n(s_1,s_2; 0,1)}{D_n^{\texttt{GUE}}},
                                                       \end{equation}
 where  $\lambda_1<, ...,\lambda_n$ are the eigenvalues  of a matrix in GUE and  $D_n(s_1,s_2;\omega_1,\omega_2)$ is defined in \eqref{def: Hankel}.
For  the gap probability on the  infinite interval $(s, +\infty)$, we have the distribution of the largest eigenvalue
 \begin{equation}\label{def: DistLargest}
\texttt{Pro}(\lambda_n<s)=\frac{D_n(s,s; 0,0)}{D_n^{\texttt{GUE}}}.
                                                       \end{equation}
In the large  $n$ limit,  the distribution of the largest eigenvalue converges to the celebrated  Tracy-Widom distribution
 \begin{equation}\label{eq:TW}
\lim_{n\to+\infty}\texttt{Pro}(\lambda_n<\sqrt{2n}+\frac{s}{\sqrt{2}n^{1/6}})=\exp\left(-\int_s^{+\infty}(x-s)q_{\texttt{HM}}^2(x)dx\right),
                                                       \end{equation}
 where $q_{\texttt{HM}}(s)$ is the Hastings-Mcleod solution of the second Painlev\'e equation $q''(x)-2q^3(x)-xq(x)=0$ with the asymptotic behavior $q_{\texttt{HM}}(x)\sim \Ai (x)$ as $x\to+\infty$; see \cite{TW}.

We proceed to consider the thinned process in GUE  by removing each eigenvalues $\lambda_1<, ...,\lambda_n$ of the GUE independently with probability $p\in(0,1)$; see \cite{BP04, BP06}. It is observed in \cite{BP06} that the remaining and removed eigenvalues can be interpreted as observed and unobserved particles, respectively.  If we know the information that the largest observed particle
$\lambda^{T}_{\max}$ is less that $y$,
then the conditional distribution of the largest eigenvalue $\lambda_n$ of the  original GUE can be expressed by the ratio of Hankel determiants
\begin{equation}\label{def: ConditionalGap-pro}
\texttt{Pro}(\lambda_n<x| \lambda^{T}_{\max}<y)=\frac{D_n(y,x;p,0)}{D_n(y,x;p,p)},\quad x>y,
                                                       \end{equation}
                                                       and
                                                       \begin{equation}\label{def: ConditionalGap-pro-2}
\texttt{Pro}(\lambda_n<x| \lambda^{T}_{\max}<y)=\frac{D_n(x,y;0,0)}{D_n(x,y;p,p)},\quad x<y,
                                                       \end{equation}
with  $D_n(s_1,s_2;\omega_1,\omega_2)$  defined in \eqref{def: Hankel};  see \cite{cc,cd} and \cite{BC, cd-1}.  It is noted that other thinned  random matrices in the situation of circular ensemble is considered in  \cite{cc} and  also in  \cite{bf} with applications in  the studies of  Riemann zeros.

Recently, the limits of \eqref{def: Gap-pro} and  \eqref{def: ConditionalGap-pro} are studied in \cite{cc-19,cd} by considering the Fredholm determinants of the  Airy  kernel with several discontinuities. More generally, the limits of the gap probabilities on any finite union of
intervals near the extreme eigenvalues are considered in \cite{cd}.  In \cite{XuDai2019}, the second author of the present paper and Dai derive the asymptotics of   \eqref{def: ConditionalGap-pro} via the  Fredholm determinants of the Painlev\'e XXXIV kernel which is a generalization of the Airy kernel.  In both \cite{cd} and \cite{XuDai2019},  Tracy-Widom type expressions for the limiting  distributions are established by using solutions to  the  coupled Painlev\'e II system. The Hankel determinants and orthogonal polynomials associated with the Gaussian weight with one jump discontinuity have also been considered in \cite{BC, FW, IK, MC, WuXuZhao, XuZhao}  with applications in random matrices.

The present work is devoted to the studies of  the Hankel determinants and the orthogonal polynomials associated with the Gaussian weight
with two jump discontinuities both as the degree $n$ is finite and as $n$ tends to infinity.   When the degree $n$  is finite, we show that the Hankel determinants and  the orthogonal polynomials are described by the coupled Painlev\'e IV system. As the jump discontinuities tend to the largest eigenvalue of GUE and the degree $n$ grows to infinity, we establish   asymptotic expansions  for the Hankel determinants and  the orthogonal polynomials.  The asymptotics are expressed in terms of   solutions to  the  coupled Painlev\'e II system. As applications, our results reproduce  the asymptotic expansions of the  gap probability in a finite interval near the largest eigenvalue of GUE and the conditional  distribution of largest eigenvalue of GUE as defined in  \eqref{def: Gap-pro} and \eqref{def: ConditionalGap-pro}, respectively,  which are obtained previously in \cite{cd, XuDai2019}.

\subsection{Statement of results}
\vskip .3cm
\subsection*{The coupled Painlev\'{e} IV system }
We introduce the Hamiltonian
\begin{equation}\label{int: H}
H_{\texttt{IV}}(a_1,a_2,b_1,b_2; x; s)=-2(a_1b_1+a_2b_2+n)(a_1+a_2)+2(a_1b_1(x-s)+a_2b_2(x+s)+nx)-(a_1b_1^2+a_2b_2^2),
  \end{equation}
  which is a special Garnier system in two variables in the studies of the classification of 4-dimensional Painlev\'e-type equations by Kawakami, Nakamura and Sakai \cite[Equations (3.12)-(3.13)]{KawNakSak}.
The coupled Painlev\'e IV system can be written as  the following Hamiltonian system
  \begin{equation}\label{int: HIV-system}
   \left\{\begin{array}{l}
\frac{d a_1}{d x}=\frac{\partial H_{\texttt{IV}}}{\partial b_1}(a_1,a_2,b_1,b_2; x,s)=-2a_1(a_1+a_2+b_1-x+s),\\
\frac{d a_2}{dx}=\frac{\partial H_{\texttt{IV}}}{\partial b_2}(a_1,a_2,b_1,b_2; x,s)=-2a_2(a_1+a_2+b_2-x-s),\\
\frac{d b_1}{d x}=-\frac{\partial H_{\texttt{IV}}}{\partial a_1}(a_1,a_2,b_1,b_2; x,s)=b_1^2+2b_1(2a_1+a_2-x+s)+2(a_2b_2+n),\\
\frac{db_2}{dx}=-\frac{\partial H_{\texttt{IV}}}{\partial a_2}(a_1,a_2,b_1,b_2; x,s)=b_2^2+2b_2(a_1+2a_2-x-s)+2(a_1b_1+n).
\end{array}\right.
\end{equation}
Eliminating $b_1$ and $b_2$ from the system, we  find that  $a_1$ and $a_2$ solve  a couple of second order nonlinear differential equations
\begin{equation}\label{int:ak}
\left\{\begin{array}{l}
       \frac{\mathtt{d} ^2a_1 }{\mathtt{d} x^2}-\frac 1{2a_1} \left(\frac{\mathtt{d} a_1}{\mathtt{d} x}\right)^2-6a_1(a_1+a_2)^2+8a_1(a_1+a_2)x-8a_1^2s+2(2n-1)a_1-2a_1(x-s)^2=0,\\
         \frac{\mathtt{d}^2a_2}{\mathtt{d} x^2}-\frac 1{2a_2} \left(\frac{\mathtt{d} a_2}{\mathtt{d} x}\right)^2-6a_2(a_1+a_2)^2+8a_2(a_1+a_2)x+8a_2^2s+2(2n-1)a_2-2a_2(x+s)^2=0.
       \end{array}
\right.
\end{equation}
    If $a_2=0$,   we recover from the above equations the classical  Painlev\'{e} IV equation (see \cite{FokasBook} and \cite[Equation (32.2.4)]{O}  )
 \begin{equation}\label{eq:PIV}
  y_{\texttt{IV}}''=\frac{1}{2y_{\texttt{IV}}}y_{\texttt{IV}}'^2+\frac{3}{2}y_{\texttt{IV}}^3+4x y_{\texttt{IV}}^2+2(x^2+1-2n)y_{\texttt{IV}}, \quad y_{\texttt{IV}}(x)=-2a_1(x+s;s).
\end{equation}

\subsection*{Orthogonal polynomials of finite degree: the coupled  Painlev\'{e} IV system  }
Our first result shows that, when the degree $n$ is finite,  several quantities of the orthogonal polynomials associated with the weight function \eqref{weight} can be expressed in terms
of the coupled  Painlev\'{e} IV system. These quantities include the
the Hankel determiniants,  the recurrence coefficients,  leading coefficients and the values of the orthogonal polynomials
at the jump discontinuities  of  \eqref{weight}.  We are interested in the Gaussian weight with two jump discontinuities, thus without loss of generality we assume that the parameters in \eqref{weight} satisfy
 \begin{equation}\label{def:Para}
s_1<s_2;  \quad \omega_1\geqslant0 , \quad  \omega_2\geqslant0,  \quad \omega_1\neq \omega_2, \quad \omega_1\neq 1.
 \end{equation}

\begin{thm} \label{thm:HankelFixedn}
 Let   $s_k$ and $\omega_k$, $k=1,2$ be as in \eqref{def:Para} and $D_n(s_1,s_2)=D_n(s_1,s_2,\omega_1,\omega_2)$ be the Hankel determinant defined in \eqref{def: Hankel}, we denote
 \begin{equation}\label{def:F}
 F(s_1, s_2)=\frac{\partial}{\partial s_1}\ln D_n(s_1, s_2)+\frac{\partial}{\partial s_2}\ln  D_n(s_1, s_2),
 \end{equation}
 and
 \begin{equation}\label{def:x}
 x=\frac{s_1+s_2}{2}, \quad s=\frac{s_2-s_1}{2}.
 \end{equation}
Then $F(s_1,s_2)$  is related to the Hamiltonian for the coupled Painlev\'e IV system by
\begin{equation}\label{thm: F-H}
 F(s_1,s_2)=H_{\texttt{IV}}(x;s)-2nx.
  \end{equation}
Moreover, let $\alpha_n(s_1,s_2)$,  $\beta_n(s_1,s_2)$ be the recurrence coefficients defined in \eqref{def: rec},  $\gamma_n(s_1,s_2)$ be the leading coefficient of the orthonormal polynomial defined in \eqref{def: Orth} and $\pi_n(x)=\pi_n(x;s_1,s_2)$ be the  monic orthogonal polynomial defined in \eqref{def: Orth}, we have    \begin{align}\label{thm:alpha}
&\alpha_n(s_1,s_2)=\frac{a_1(x;s)b_1^2(x;s)+a_2(x;s)b_2^2(x;s)}{2(a_1(x;s)b_1(x;s)+a_2(x;s)b_2(x;s)+n)}, \\ \label{thm: beta}
& \beta^2_n(s_1,s_2)=\frac {1}{2}\left(a_1(x;s)b_1(x;s)+a_2(x;s)b_2(x;s)+n\right),\\ \label{thm: gamma-0}
&\gamma_{n-1}^2=\frac{1}{4\pi i}e^{x^2}y(x;s)\neq 0,\\ \label{thm: gamma}
&\frac{d }{dx}\ln \gamma_{n-1}(s_1,s_2)=a_1(x;s)+a_2(x;s),\\  \label{thm:pns1}
&\pi_n(s_1)^2=\frac{2\pi i }{\omega_1-1}e^{-2sx+s^2}\frac{a_1(x;s)b_1(x;s)^2}{y(x;s)},\\  \label{thm:pns2}
&\pi_n(s_2)^2=\frac{2\pi i}{\omega_2-\omega_1}e^{2sx+s^2}\frac{ a_2(x;s)b_2(x;s)^2}{y(x;s)}, \end{align}
where $a_k(s_1,s_2)$ and $b_k(s_1,s_2)$, $k=1,2$,  satisfy the coupled Painlev\'e IV system \eqref{int: HIV-system} and
$y(x;s)$ is connected to $a_k(x;s)$, $k=1,2$, by $\frac{d y}{d x}=2(a_1+a_2-x)y$.
     \end{thm}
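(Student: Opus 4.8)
The natural starting point is the Fokas--Its--Kitaev characterisation: let $Y(z)=Y(z;s_1,s_2)$ be the unique $2\times2$ matrix analytic in $\mathbb{C}\setminus\mathbb{R}$ with $Y_+(x)=Y_-(x)\left(\begin{array}{cc}1 & w(x)\\ 0 & 1\end{array}\right)$ on $\mathbb{R}$ and $Y(z)=\bigl(I+Y_1z^{-1}+Y_2z^{-2}+\cdots\bigr)z^{n\sigma_3}$ as $z\to\infty$, where $\sigma_3=\mathrm{diag}(1,-1)$. Then $Y_{11}(z)=\pi_n(z)$ and $Y_{21}(z)=-2\pi i\,\gamma_{n-1}^2\,\pi_{n-1}(z)$, so that $\pi_n(s_k)=Y_{11}(s_k)$ (the first column of $Y$ is entire, hence continuous at $s_1,s_2$), $\gamma_{n-1}^2=-(Y_1)_{21}/(2\pi i)$, $\beta_n^2=(Y_1)_{12}(Y_1)_{21}$ and $\alpha_n=(Y_2)_{12}/(Y_1)_{12}-(Y_1)_{22}$. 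Thus every quantity in the theorem is a functional of $Y$ at $z=\infty$ and at $z=s_1,s_2$, and it suffices to parametrise these data by a solution of the coupled Painlev\'e IV system \eqref{int: HIV-system}.

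\textbf{A constant-jump problem and its Lax pair.} Conjugate away the Gaussian factor by setting $\Psi(z)=C\,Y(z)\,e^{-\frac12 z^2\sigma_3}$ for a suitable constant left factor $C$; since $e^{-\frac12 z^2\sigma_3}$ is entire, $\Psi$ inherits the jump $\left(\begin{array}{cc}1 & e^{z^2}w(z)\\ 0 & 1\end{array}\right)$, which is the piecewise-constant upper-triangular matrix with off-diagonal entries $1,\omega_1,\omega_2$ on $(-\infty,s_1),(s_1,s_2),(s_2,\infty)$. Opening lenses at $s_1$ and $s_2$ turns this into an RH problem posed on finitely many rays issuing from $s_1$ and $s_2$, with constant jump matrices of Stokes type, and with $\Psi(z)\sim(I+\cdots)z^{n\sigma_3}e^{-\frac12 z^2\sigma_3}$ at infinity. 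Because the jump matrices and (with $x=(s_1+s_2)/2$ and $s=(s_2-s_1)/2$ as parameters, so that the two poles sit at $z=x-s$ and $z=x+s$) the contour are independent of the deformation parameters, the logarithmic derivatives $\partial_z\Psi\cdot\Psi^{-1}$, $\partial_x\Psi\cdot\Psi^{-1}$, $\partial_s\Psi\cdot\Psi^{-1}$ are rational in $z$ with an irregular singularity at $z=\infty$ and simple poles only at $z=s_1,s_2$. Reading off the coefficients in these rational matrices defines the four scalar functions $a_1,a_2,b_1,b_2$ of $(x;s)$, together with the auxiliary function $y(x;s)$ appearing as a $(2,1)$-type entry; the first-order relation $y'=2(a_1+a_2-x)y$ is exactly the $(2,1)$ component of $\partial_x\Psi\cdot\Psi^{-1}$. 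The zero-curvature condition $\partial_xA-\partial_zB+[A,B]=0$ for the pair $\partial_z\Psi=A\Psi,\ \partial_x\Psi=B\Psi$ is then equivalent, after collecting powers of $z$, to the Hamiltonian system \eqref{int: HIV-system}; alternatively one matches $A,B$ to the normal form of \cite[Equations (3.12)--(3.13)]{KawNakSak}.

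\textbf{Connection formulas.} Transporting back through $\Psi=C\,Y\,e^{-\frac12z^2\sigma_3}$, the $z\to\infty$ expansion expresses $Y_1,Y_2$ in terms of $a_k,b_k$; the diagonal entries give $\beta_n^2$ as in \eqref{thm: beta}, and combining with the off-diagonal entries yields $\alpha_n$ as in \eqref{thm:alpha}. The $(2,1)$ entry at order $z^{-1}$ identifies $\gamma_{n-1}^2$ with $\tfrac{1}{4\pi i}e^{x^2}y$, which is \eqref{thm: gamma-0}, and differentiating this together with $y'=2(a_1+a_2-x)y$ gives \eqref{thm: gamma}. Evaluating $Y_{11}$ at $z=s_1$ and $z=s_2$ and using the local structure of $\Psi$ near each pole --- where the relevant off-diagonal jump entries are $\omega_1-1$ at $s_1$ and $\omega_2-\omega_1$ at $s_2$ --- produces \eqref{thm:pns1} and \eqref{thm:pns2}, with the exponential prefactors $e^{-2sx+s^2}$ and $e^{2sx+s^2}$ arising from the conjugation $e^{-\frac12z^2\sigma_3}$ evaluated at $z=s_1=x-s$ and $z=s_2=x+s$.

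\textbf{The Hamiltonian identity and the main obstacle.} Since $\partial_{s_1}+\partial_{s_2}=\partial_x$ under the change of variables, $F(s_1,s_2)=\partial_x\ln D_n$. Differentiating $D_n=\det(\mu_{j+k})_{j,k=0}^{n-1}$ with $\mu_m=\int_{\mathbb{R}}x^mw(x)\,dx$ and using $\partial_{s_1}w=(1-\omega_1)e^{-s_1^2}\delta(\cdot-s_1)$, $\partial_{s_2}w=(\omega_1-\omega_2)e^{-s_2^2}\delta(\cdot-s_2)$ yields the differential identity
$$F(s_1,s_2)=(1-\omega_1)e^{-s_1^2}\,\widetilde K_n(s_1,s_1)+(\omega_1-\omega_2)e^{-s_2^2}\,\widetilde K_n(s_2,s_2),$$
where $\widetilde K_n(x,x)=\sum_{j=0}^{n-1}\gamma_j^2\pi_j(x)^2=\gamma_{n-1}^2\bigl(\pi_n'\pi_{n-1}-\pi_n\pi_{n-1}'\bigr)(x)$ is the unweighted Christoffel--Darboux kernel, itself a definite entry of a $\partial_z(Y^{-1}Y)$-type combination at $z=s_k$ and hence, through the Lax pair, a rational expression in $a_k,b_k,x,s$. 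Substituting and simplifying with the aid of \eqref{int: HIV-system} reproduces $H_{\texttt{IV}}(x;s)-2nx$; equivalently $F=\partial_x\ln D_n$ is recognised as the Jimbo--Miwa--Ueno $\tau$-function logarithmic derivative for this isomonodromic family, which equals $H_{\texttt{IV}}$ up to the elementary additive term $-2nx$ coming from the $z^{n\sigma_3}$-normalisation. The main difficulty is bookkeeping rather than conceptual: fixing $C$ and the ray configuration so that the Lax pair matches the Kawakami--Nakamura--Sakai normal form verbatim, and carrying the prefactors $2\pi i$, $e^{x^2}$, $e^{\mp2sx+s^2}$ faithfully through $Y\mapsto\Psi$ and the local analysis at $s_1,s_2$; the distributional nature of $\partial_{s_j}w$ in the last step also forces the careful evaluation of $\widetilde K_n$ exactly at the discontinuity, which is precisely where \eqref{thm:pns1}--\eqref{thm:pns2} and the function $y(x;s)$ re-enter and close the argument.
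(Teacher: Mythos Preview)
Your strategy coincides with the paper's: pass from the Fokas--Its--Kitaev $Y$-problem to a constant-jump problem, read off a Lax pair, identify the compatibility condition with the coupled Painlev\'e IV system \eqref{int: HIV-system}, and translate the large-$z$ and local ($z=s_1,s_2$) data of $Y$ into the formulas \eqref{thm:alpha}--\eqref{thm:pns2}; the differential identity you write for $F$ via the Christoffel--Darboux kernel is exactly the paper's \eqref{def: DiffId-1}, and your remark about the JMU $\tau$-function is the content of \eqref{def:H}--\eqref{H-F}.

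There is, however, one concrete gap in the way you set up the constant-jump problem. With your definition $\Psi(z)=C\,Y(z)\,e^{-\frac12 z^2\sigma_3}$ the singularities of $\Psi$ sit at $z=s_1=x-s$ and $z=s_2=x+s$, so the jump contour is \emph{not} independent of the deformation parameter $x$, contrary to what you assert. As a consequence $\partial_x\Psi\cdot\Psi^{-1}$ acquires simple poles at $s_1,s_2$ and does not reduce to the linear-in-$z$ matrix $B=z\sigma_3+A_\infty$ that produces \eqref{int: HIV-system} directly. The paper fixes this by the additional translation $z\mapsto z+x$ (together with a $\sigma_1$-conjugation and the left factor $e^{\frac{x^2}{2}\sigma_3}$): it sets $\Phi(z;x,s)=\sigma_1 e^{\frac{x^2}{2}\sigma_3}Y(z+x)e^{-\frac12(z+x)^2\sigma_3}\sigma_1$, so that the singularities sit at $z=\pm s$, genuinely independent of $x$, and then $B=z\sigma_3+A_\infty$ is polynomial. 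This shift is what your ``suitable constant left factor $C$'' cannot supply. Two smaller points: the lens-opening you invoke is unnecessary here, since the jumps are already piecewise constant after the $e^{-\frac12 z^2\sigma_3}$ conjugation (lenses enter only in the asymptotic analysis of Section~\ref{OPAsy}); and the equation $y'=2(a_1+a_2-x)y$ is not an entry of $B$ itself but the $(1,2)$-entry of the zero-curvature relation $\frac{d}{dx}A_\infty=x[A_\infty,\sigma_3]-[A_1+A_2,\sigma_3]$ (see \eqref{eq:dA}).
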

\begin{rem} \label{rem:PIV}
In view of  \eqref{thm: gamma-0} and
\eqref{thm:pns2}, we have
\begin{equation}\label{eq: akbk-est}
a_2(x;s)b_2(x;s)^2=O(\omega_1-\omega_2), \quad \mbox{as}  \quad \omega_1\to \omega_2,
\end{equation}
where the error bound is uniform for $x$ and $s$ in any compact subset of $\mathbb{R} $ and $\mathbb{R}\setminus\{0\}$, respectively.
Using $\pi_{n-1}(s_2)^2=\frac{8\pi i}{\omega_2-\omega_1}e^{2sx+s^2}\frac{ a_2(x;s)}{y(x;s)}$ (see \eqref{eq:qij}), we obtain that
\begin{equation}\label{eq: ak-est}
a_2(x;s)=O(\omega_1-\omega_2), \quad \mbox{as}  \quad \omega_1\to \omega_2.
\end{equation}
Since $a_2(x;s)\to 0$, as $\omega_1\to \omega_2$,  the function $y_{\texttt{IV}}(x)=-2a_1(x+s;s)$ solves the classical Painlev\'e IV equation as shown before in \eqref{eq:PIV}.
Thus, as $\omega_1\to \omega_2$, Theorem \ref{thm:HankelFixedn} implies that the Hankel determinants and the orthogonal polynomials associated with the weight function \eqref{weight} with one discontinuity are related to the classical  Painlev\'e IV equation.
\end{rem}

\subsection*{The coupled Painlev\'{e} II system}

To state our main results on the asymptotics of the orthogonal polynomials, we  introduce the following coupled  Painlev\'{e} II system in dimension four
\renewcommand{\arraystretch}{1.3}
\begin{equation}\label{eq:CPII}
\left\{\begin{array}{l}
         \frac {dw_1}{dx}=-\frac{\partial H_{\texttt{II}}}{\partial v_1} =2(v_1+v_2+\frac{x}{2})-w_1^2,\\
         \frac {dv_1}{dx}=\frac{\partial H_{\texttt{II}}}{\partial w_1} = 2v_1w_1,\\
          \frac {dw_2}{dx}=-\frac{\partial H_{\texttt{II}}}{\partial v_2} =2(v_1+v_2+\frac{x+s}{2})-w_2^2, \\
         \frac {dv_2}{dx}=\frac{\partial H_{\texttt{II}}}{\partial w_2} = 2v_2w_2,\\
       \end{array}
\right.
\end{equation}
where $v_k=v_k (x;s)$, $w_k=w_k (x;s)$, $k=1,2$ and  the Hamiltonian $H_{\texttt{II}}=H_{\texttt{II}}(v_1,v_2,w_1,w_2;x; s)$ is given by
\begin{equation}\label{def:Hamiltonian-CPII}
H_{\texttt{II}}(v_1,v_2,w_1,w_2;x;s)=-(v_1+v_2)^2-(v_1+v_2)x+v_1w_1^2+v_2w_2^2-sv_2.
\end{equation}
The coupled  Painlev\'{e} II system
appears in both of the degeneration schemes of the
Garnier system  in two variables \cite[Equations (3.5)-(3.7))]{Kaw2017} and  the Sasano system \cite[Equations (3.22)-(3.23)]{Kaw2018} by Kawakami.

Eliminating $w_1$ and  $w_2$ from the Hamiltonian system \eqref{eq:CPII} gives us the following nonlinear equations for $v_1$ and $v_2$
\begin{equation}\label{int-equation v}
\left\{\begin{array}{l}
         v_{1xx}-\frac{v_{1x}^2}{2v_1} -4v_1(v_1+v_2+\frac{x}{2})=0, \\
          v_{2xx}-\frac{v_{2x}^2}{2v_2} -4v_2(v_1+v_2+\frac{x+s}{2})=0.\\
       \end{array}
\right.
\end{equation}
Let $v_k(x;s)=u_k(x;s)^2=u_k(x)^2$ , $k=1,2$, the above equations are further simplified to
\begin{equation}\label{int-equation u}
\left\{\begin{array}{l}
         u_{1xx}-xu_1-2u_1(u_1^2+u_2^2)=0, \\
          u_{2xx}-(x+s)u_2 -2u_2(u_1^2+u_2^2)=0.\\
       \end{array}
\right.
\end{equation}
If $v_2(x)=u_2(x)^2=0$,  then \eqref{int-equation u} is
reduced to the classical second Painlev\'e equation
\begin{equation}\label{eq:pII}
q''-2q^3-xq=0.
\end{equation}
The functions $v_k(x;s)$ and $u_k(x;s)$, $k=1,2$,  are also connected to  $H_{\texttt{II}}(x;s)=H_{\texttt{II}}(v_1,v_2,w_1,w_2;x;s)$ by
\begin{equation}\label{eq: DHaml}
\frac{d}{dx} H(x;s)=-(v_1(x;s)+v_2(x;s))=-(u_1(x;s)^2+u_2(x;s)^2),\end{equation}
which can be obtained be taking derivative on both side of \eqref{def:Hamiltonian-CPII}; see also \cite[Equation (7.37)]{XuDai2019}.
The existence of   solutions to the coupled Painlev\'e II system are established in  \cite{cd, XuDai2019}.

\begin{pro} \label{pro:H} (\cite{cd, XuDai2019})
 For the parameters $\omega_k$, $k=1,2$ as given in \eqref{def:Para} and $ s> 0$,  there exist real-valued and pole-free solutions $v_k(x; s) $ (or $u_k(x; s)$), $k=1,2$,  to the coupled nonlinear differential equations \eqref{int-equation v} (or \eqref{int-equation u}) subject to the boundary conditions as $x\to+\infty$
 \begin{equation}\label{eq:u-asy}
v_1(x;s)=u_1(x;s)^2\sim (\omega_1-\omega_2) \Ai(x)^2, \quad v_2(x;s)=u_2(x;s)^2\sim (1-\omega_1) \Ai(x+s)^2,
  \end{equation}
  where $\Ai$ is the standard Airy function.
        \end{pro}



\subsection*{Asymptotics of the Hankel determinants and applications in random matrices }
Our second result gives  the asymptotics of the Hankel determinants expressed in terms of the solutions to the coupled Painlev\'e II system.
\begin{thm} \label{thm:HankelAsy}
  Let   $s_k$ and $\omega_k$, $k=1,2$ be as in \eqref{def:Para} and $s_k$ are related to $t_k$, $k=1,2$ by
 $$s_1=\sqrt{2n}+\frac {t_1}{\sqrt{2}n^{1/6}}, \quad s_2=\sqrt{2n}+\frac {t_2}{\sqrt{2}n^{1/6}}, $$
with $t_1<t_2$, then we have the asymptotics of  the Hankel determinant $D_n(s_1,s_2)=D_n(s_1,s_2;\omega_1,\omega_2)$ defined in  \eqref{def: Hankel}
as $n\to\infty$
\begin{equation}\label{thm: HAsy}
D_n(s_1,s_2)= D_n^{\texttt{GUE}}\exp\left(-\int_{t_1}^{+\infty}(\tau-t_1)(u_1(\tau;t_2-t_1)^2+u_2(\tau;t_2-t_1)^2)d\tau\right) \left(1+O(n^{-1/6})\right),\end{equation}
where $D_n^{\texttt{GUE}}$ is the Hankel determinant associated with the  Gaussian weight  with expression given in  \eqref{def: Hankel-GUE}, $u_k(x; s)$, $k=1,2$, are solutions to  \eqref{int-equation u}  subject to the boundary conditions \eqref{eq:u-asy}
and the error bound is uniform for $t_1$, $t_2$ in any compact subset of  $\mathbb{R}$.
\end{thm}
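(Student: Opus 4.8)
The plan is to combine the differential identity already contained in Theorem~\ref{thm:HankelFixedn} with a Deift--Zhou steepest-descent analysis of the Riemann--Hilbert (RH) problem for the orthogonal polynomials $\pi_n(\cdot\,;s_1,s_2)$, carried out in the double scaling window $s_j=\sqrt{2n}+t_j/(\sqrt 2\,n^{1/6})$. Since $D_n^{\texttt{GUE}}$ does not depend on $s_1,s_2$, and since $\frac{\partial}{\partial s_1}+\frac{\partial}{\partial s_2}=\frac{\partial}{\partial x}$ at fixed $s$, equation~\eqref{thm: F-H} can be rewritten as
\begin{equation}\label{plan:diffid}
\frac{\partial}{\partial x}\ln\frac{D_n(s_1,s_2)}{D_n^{\texttt{GUE}}}\bigg|_{s}=H_{\texttt{IV}}(x;s)-2nx .
\end{equation}
Setting $\sigma=t_2-t_1$ (fixing $s=(s_2-s_1)/2$ is the same as fixing $\sigma$) and using $\frac{\partial}{\partial x}\big|_{s}=\sqrt2\,n^{1/6}\,\frac{\partial}{\partial t_1}\big|_{\sigma}$, the proof reduces to two things: first, determine the large-$n$ behaviour of $H_{\texttt{IV}}(x;s)-2nx$ in this regime; second, integrate the resulting relation starting from $t_1=+\infty$, where $D_n(s_1,s_2)/D_n^{\texttt{GUE}}\to1$ because the jump discontinuities of~\eqref{weight} then recede beyond the spectral edge. (Equivalently, one may use the individual identities for $\partial_{s_1}\ln D_n$ and $\partial_{s_2}\ln D_n$ derived in the proof of Theorem~\ref{thm:HankelFixedn}; these are the ones the RH analysis evaluates asymptotically.)

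For the first point I would run the usual chain of transformations $Y\mapsto T\mapsto S\mapsto R$ on the $2\times2$ RH problem $Y$ for the weight~\eqref{weight}: normalise at infinity using the equilibrium $g$-function of $e^{-x^2}$ on $[-\sqrt{2n},\sqrt{2n}]$ (the rescaled semicircle law), open lenses along $(-\sqrt{2n},\sqrt{2n})$, build the outer parametrix from the semicircle density, and place local parametrices at the two soft edges. At $-\sqrt{2n}$ the standard Airy parametrix suffices; at $z=\sqrt{2n}$ the local parametrix must incorporate the two jumps of $w$, which under the edge conformal map $\zeta=\zeta_n(z)$ with $\zeta_n(\sqrt{2n})=0$, $\zeta_n'(\sqrt{2n})\sim\sqrt2\,n^{1/6}$ sit at $\zeta\approx t_1$ and $\zeta\approx t_2$. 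That parametrix is precisely the model RH problem $\Psi=\Psi(\zeta;t_1,t_2)$ whose compatibility conditions produce the coupled Painlev\'e~II system~\eqref{eq:CPII}--\eqref{int-equation u} and whose pole-freeness for the parameters in~\eqref{def:Para} is exactly Proposition~\ref{pro:H} (cf.\ \cite{XuDai2019,cd}). Matching $\Psi$ against the outer parametrix on a fixed circle about $\sqrt{2n}$, on which $|\zeta|\sim n^{1/6}$, produces a small-norm RH problem for $R$ with $R(z)=I+O(n^{-1/6})$, uniformly for $z$ off the contours and for $t_1,t_2$ in compact sets, and with an error that moreover decays as $t_1\to+\infty$.

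Substituting $R=I+O(n^{-1/6})$ into the differential identity, the $g$-function piece reproduces $\partial_x\ln D_n^{\texttt{GUE}}=0$, while the edge parametrix $\Psi$ contributes the Hamiltonian of the coupled Painlev\'e~II system. Concretely, one expects
\begin{equation}\label{plan:asyF}
\frac{1}{\sqrt2\,n^{1/6}}\bigl(H_{\texttt{IV}}(x;s)-2nx\bigr)=H_{\texttt{II}}(t_1;\sigma)+O(n^{-1/6})=\int_{t_1}^{+\infty}\!\bigl(u_1(\tau;\sigma)^2+u_2(\tau;\sigma)^2\bigr)d\tau+O(n^{-1/6}),
\end{equation}
the last step being~\eqref{eq: DHaml} together with $H_{\texttt{II}}(+\infty;\sigma)=0$, with $H_{\texttt{II}}$ and $u_k$ as in~\eqref{def:Hamiltonian-CPII} and~\eqref{int-equation u}. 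The substance here is that, under the edge rescaling, the coupled Painlev\'e~IV data $a_k(x;s),b_k(x;s)$ --- which enter $H_{\texttt{IV}}$ as residues/coefficients of $Y$ at $s_1,s_2$ --- converge to the coupled Painlev\'e~II data $v_k,w_k$ of $\Psi$ at $\zeta=t_1,t_2$; uniqueness of the pole-free model solution (Proposition~\ref{pro:H}) identifies the limit.

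Finally I would integrate. Combining~\eqref{plan:diffid}, the relation $\frac{\partial}{\partial x}\big|_{s}=\sqrt2\,n^{1/6}\frac{\partial}{\partial t_1}\big|_{\sigma}$ and~\eqref{plan:asyF} gives $\frac{d}{dt_1}\big|_{\sigma}\ln\bigl(D_n/D_n^{\texttt{GUE}}\bigr)=H_{\texttt{II}}(t_1;\sigma)+O(n^{-1/6})$; integrating from $t_1=+\infty$ and using Fubini to rewrite $\int_{t_1}^{+\infty}H_{\texttt{II}}(\tau;\sigma)d\tau=\int_{t_1}^{+\infty}(\tau-t_1)\bigl(u_1(\tau;\sigma)^2+u_2(\tau;\sigma)^2\bigr)d\tau$ yields~\eqref{thm: HAsy} upon exponentiating, with $\sigma=t_2-t_1$. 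I expect the main obstacles to be: (a) in the RH step, verifying that the coupled Painlev\'e~II parametrix matches the outer parametrix to the stated order, which needs sharp large-$\zeta$ asymptotics of $\Psi$ and its pole-freeness from Proposition~\ref{pro:H}; and (b) upgrading the $O(n^{-1/6})$ error in the differential identity from uniformity on compact $t_1$-sets to a bound with enough decay as $t_1\to+\infty$ that the integration to $+\infty$ is justified and still contributes only $O(n^{-1/6})$ --- this uses that for large $t_1$ the jumps contribute negligibly and the analysis reduces to the standard Gaussian edge.
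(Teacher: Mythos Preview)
Your proposal is correct and follows essentially the same route as the paper: the paper proves Lemma~\ref{thm:F-asy} (your~\eqref{plan:asyF}) by substituting the Deift--Zhou analysis with the coupled Painlev\'e~II local parametrix at the right edge into the differential identity~\eqref{def: DiffId-1}, proves a separate Lemma~\ref{lem:Hankel-asy-out} giving $D_n(s_1,s_2)=D_n^{\texttt{GUE}}(1+O(e^{-cn^{1/4}}))$ once $s_1\geq\sqrt{2n}+c_0$, and then integrates in $t_1$ up to $t_0\sim\sqrt2\,c_0 n^{1/6}$ and uses the integration-by-parts identity~\eqref{eq:int-H}, exactly as you outline. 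One small calibration: in the paper's rescaled variable the local disk has fixed radius about $z=1$, so $|\zeta|\sim n^{2/3}$ on its boundary and the matching/small-norm error for $R$ is $O(n^{-1/3})$ rather than $O(n^{-1/6})$; the final $O(n^{-1/6})$ in~\eqref{thm: HAsy} then arises from multiplying this by the $n^{1/6}$ coming from the Jacobian of the edge map in the differential identity.
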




\begin{rem} \label{rem:HankelAsy}
When $s\to 0$, it is shown in \cite[Equation (1.28)]{cd} that
\begin{equation*}
u_1(x;s)^2+u_2(x;s)^2=q^2(x;\omega_2)+O(s),
 \end{equation*}
where $q(x;\omega_2)$ is the Ablowitz-Segur solution to the second Painlev\'e equation \eqref{eq:pII} with the  boundary condition  as $ x\to +\infty$
\begin{equation}\label{eq: ASAsy} q(x;\omega_2)\sim \sqrt{1-\omega_2}~\Ai(x).\end{equation}
Therefore, as $t_2-t_1\to 0$, the formula \eqref{thm: HAsy} is reduced to
\begin{equation}\label{eq: HAsy-reduced}
D_n(s_1)= D_n^{\texttt{GUE}}\exp\left(-\int_{t_1}^{+\infty}(\tau-t_1)q^2(\tau;\omega_2))d\tau\right) \left(1+O(n^{-1/6})\right),
 \end{equation}
 where $D_n(s_1)$ is the Hankel determinant associated with the weigh function \eqref{weight} with one jump discontinuity by  taking $s_1=s_2=\sqrt{2n}+\frac {t_1}{\sqrt{2}n^{1/6}}$.  The expansion  agrees with the result from \cite{BC} where the case with one jump discontinuity is considered.
\end{rem}


As an application of the asymptotics of the Hankel determinants.  We derive the  gap probability of there being no eigenvalues in a finite interval near the extreme eigenvalues of large GUE by using \eqref{def: Gap-pro} and \eqref{thm: HAsy}, which confirms a recent result from \cite[Equation (2.6)]{cd}.

\begin{cor} (\cite{cd}) \label{thm:GapPro}
 Let $s_1$ and $s_2$ be as in Theorem \ref{thm:HankelAsy}, we have the asymptotic approximation of  the gap probability of finding no eigenvalues of GUE in the finite interval  $(s_1,s_2)$
\begin{align}\label{thm: GapPro}
\texttt{Pro}(\lambda_j\not\in(s_1,s_2): j=1...n)&=\exp\left(-\int_{t_1}^{+\infty}(\tau-t_1)(u_1(\tau;t_2-t_1)^2+u_2(\tau;t_2-t_1)^2)d\tau\right) \nonumber\\
&~~~\times\left(1+O(n^{-1/6})\right),
 \end{align}
 where $u_k(x; s)$, $k=1,2$, are solutions to  \eqref{int-equation u}  subject to the boundary conditions \eqref{eq:u-asy}
with the parameters $\omega_1=0, \omega_2=1$ and
  the error bound is uniform for $t_1$ and $t_2$ in any compact subset of $\mathbb{R}$.
\end{cor}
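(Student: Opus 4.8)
The plan is to deduce this corollary directly from Theorem~\ref{thm:HankelAsy} together with the determinantal identity \eqref{def: Gap-pro}, so that no new analysis is needed. First I would check that the parameter choice $\omega_1=0$, $\omega_2=1$ is admissible in the sense of \eqref{def:Para}: one has $\omega_1,\omega_2\geq 0$, $\omega_1=0\neq 1=\omega_2$, and $\omega_1=0\neq 1$, so all the standing assumptions hold, while $s_1<s_2$ follows from $t_1<t_2$. Consequently Theorem~\ref{thm:HankelAsy} applies to $D_n(s_1,s_2;0,1)$ with $s_k=\sqrt{2n}+t_k/(\sqrt 2\,n^{1/6})$ and gives
\[
D_n(s_1,s_2;0,1)=D_n^{\texttt{GUE}}\exp\!\Big(-\!\int_{t_1}^{+\infty}\!(\tau-t_1)\big(u_1(\tau;t_2-t_1)^2+u_2(\tau;t_2-t_1)^2\big)\,d\tau\Big)\big(1+O(n^{-1/6})\big),
\]
where $u_1,u_2$ are the solutions of \eqref{int-equation u} singled out by the boundary conditions \eqref{eq:u-asy} with $(\omega_1,\omega_2)=(0,1)$, i.e. $v_1=u_1^2\sim -\Ai(x)^2$ and $v_2=u_2^2\sim \Ai(x+s)^2$ as $x\to+\infty$, whose existence is guaranteed by Proposition~\ref{pro:H}; the error term is uniform for $t_1,t_2$ in compact subsets of $\mathbb{R}$.

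Next I would invoke \eqref{def: Gap-pro}, according to which $\texttt{Pro}(\lambda_j\not\in(s_1,s_2):j=1,\dots,n)$ equals exactly $D_n(s_1,s_2;0,1)/D_n^{\texttt{GUE}}$. Dividing the displayed asymptotics by $D_n^{\texttt{GUE}}$ cancels that prefactor and yields \eqref{thm: GapPro} verbatim, the uniformity of the $O(n^{-1/6})$ correction being inherited unchanged from Theorem~\ref{thm:HankelAsy}.

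As for difficulties, there is none of substance: the entire Riemann--Hilbert/steepest-descent machinery behind Theorem~\ref{thm:HankelAsy} has already been carried out for general $\omega_1,\omega_2\geq 0$, and the only points deserving a remark are (i) the admissibility of the degenerate value $\omega_1=0$ in \eqref{def:Para}, noted above, and (ii) the identification of the relevant coupled Painlev\'e~II solutions as precisely those of Proposition~\ref{pro:H} at $(\omega_1,\omega_2)=(0,1)$. If desired, one may add as a consistency check that letting $t_2-t_1\to 0$ in \eqref{thm: GapPro} and using Remark~\ref{rem:HankelAsy} (with $q(x;\omega_2)\equiv 0$ when $\omega_2=1$) returns the trivial identity $\texttt{Pro}\to 1$, as it must for a vanishing interval.
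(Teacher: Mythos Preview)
Your proposal is correct and follows exactly the route indicated by the paper: the corollary is obtained by specialising Theorem~\ref{thm:HankelAsy} to $(\omega_1,\omega_2)=(0,1)$ and dividing by $D_n^{\texttt{GUE}}$ via the identity \eqref{def: Gap-pro}. The paper does not give a separate detailed proof beyond this observation, so your write-up (including the admissibility check against \eqref{def:Para}) is already more explicit than the original.
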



In the second application, we derive from \eqref{thm: HAsy} and \eqref{eq: HAsy-reduced} the large $n$ limit of the  distribution \eqref{def: ConditionalGap-pro} in the thinning and conditioning GUE. This reproduces the result in \cite{cd, XuDai2019}.
\begin{cor} (\cite{cd, XuDai2019})
 Let $s_1$ and $s_2$ be as in Theorem \ref{thm:HankelAsy},
we have the asymptotics of  the conditional gap probability
\begin{align}\label{thm: ConGapPro}
\texttt{Pro}(\lambda_n<s_2| \lambda^{T}_{\max}<s_1)&=\exp\left(-\int_{t_1}^{+\infty}(\tau-t_1)(u_1(\tau;t_2-t_1)^2+u_2(\tau;t_2-t_1)^2-q^2(\tau;p))d\tau\right)\nonumber\\
&~~\times \left(1+O(n^{-1/6})\right),
 \end{align}
 where $u_k(x; s)$, $k=1,2$, are solutions to  \eqref{int-equation u}  subject to the boundary conditions \eqref{eq:u-asy}
with the parameters $\omega_1=p\in(0,1), \omega_2=0$,   $q(x;p)$   is the Ablowitz-Segur solution to the second Painlev\'e equation \eqref{eq:pII}   with the asymptotics \eqref{eq: ASAsy} and the error bound is uniform for $t_1$ and $t_2$ in any compact subset of $\mathbb{R}$.
\end{cor}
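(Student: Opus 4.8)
\noindent The plan is to reduce the conditional probability to a ratio of Hankel determinants via \eqref{def: ConditionalGap-pro} and then to insert the large-$n$ asymptotics already established. Since the hypotheses of Theorem \ref{thm:HankelAsy} impose $t_1<t_2$, hence $s_1<s_2$, identity \eqref{def: ConditionalGap-pro} with $y=s_1$ and $x=s_2$ gives
\begin{equation*}
\texttt{Pro}(\lambda_n<s_2| \lambda^{T}_{\max}<s_1)=\frac{D_n(s_1,s_2;p,0)}{D_n(s_1,s_2;p,p)},
\end{equation*}
so it suffices to expand numerator and denominator separately and divide.

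For the numerator I would apply Theorem \ref{thm:HankelAsy} directly: the parameters $\omega_1=p\in(0,1)$ and $\omega_2=0$ satisfy the standing assumptions \eqref{def:Para} (in particular $\omega_1\neq\omega_2$ and $\omega_1\neq1$), so
\begin{equation*}
D_n(s_1,s_2;p,0)=D_n^{\texttt{GUE}}\exp\!\left(-\int_{t_1}^{+\infty}(\tau-t_1)\bigl(u_1(\tau;t_2-t_1)^2+u_2(\tau;t_2-t_1)^2\bigr)d\tau\right)\bigl(1+O(n^{-1/6})\bigr),
\end{equation*}
where $u_1,u_2$ solve \eqref{int-equation u} with the boundary conditions \eqref{eq:u-asy} specialized to $\omega_1=p$, $\omega_2=0$, that is $v_1\sim p\,\Ai(x)^2$ and $v_2\sim(1-p)\,\Ai(x+s)^2$ as $x\to+\infty$; existence of such solutions is Proposition \ref{pro:H}. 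For the denominator I would first observe that the weight $w(x;s_1,s_2;p,p)$ in \eqref{weight} does not depend on $s_2$ --- its middle and right pieces coincide --- so $D_n(s_1,s_2;p,p)$ depends on $s_1$ only and is exactly the Hankel determinant of the Gaussian weight with a single jump from $1$ to $p$ at $s_1$. Hence the reduced asymptotics \eqref{eq: HAsy-reduced} of Remark \ref{rem:HankelAsy} (equivalently the one-jump result of \cite{BC}) apply with jump parameter $\omega_2=p$, giving
\begin{equation*}
D_n(s_1,s_2;p,p)=D_n^{\texttt{GUE}}\exp\!\left(-\int_{t_1}^{+\infty}(\tau-t_1)\,q^2(\tau;p)\,d\tau\right)\bigl(1+O(n^{-1/6})\bigr),
\end{equation*}
with $q(\cdot;p)$ the Ablowitz-Segur solution of \eqref{eq:pII} satisfying $q(x;p)\sim\sqrt{1-p}\,\Ai(x)$, which is precisely the normalization \eqref{eq: ASAsy}.

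Dividing the two expansions, the prefactors $D_n^{\texttt{GUE}}$ cancel, the two exponentials merge into $\exp\bigl(-\int_{t_1}^{+\infty}(\tau-t_1)(u_1^2+u_2^2-q^2(\tau;p))\,d\tau\bigr)$, and the ratio $(1+O(n^{-1/6}))/(1+O(n^{-1/6}))$ is again $1+O(n^{-1/6})$; uniformity of the error for $t_1,t_2$ in compact subsets of $\mathbb{R}$ is inherited from the uniformity statements in Theorem \ref{thm:HankelAsy} and in \eqref{eq: HAsy-reduced}. This yields \eqref{thm: ConGapPro}. The only step needing genuine care --- and the main, if modest, obstacle --- is the treatment of the denominator: one must justify that $D_n(s_1,s_2;p,p)$ really collapses to a one-jump determinant, so that \eqref{eq: HAsy-reduced}/\cite{BC} applies with $t_1$ itself as scaling variable rather than being accessed only as the degenerate limit $t_2\to t_1$ of the two-jump formula, and confirm that the resulting Painlev\'e II parameter is $\sqrt{1-p}$, matching the $q(\cdot;p)$ in the statement. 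Everything else is routine bookkeeping.
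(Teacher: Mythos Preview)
Your proposal is correct and follows essentially the same route as the paper: the text states explicitly that the corollary is obtained ``from \eqref{thm: HAsy} and \eqref{eq: HAsy-reduced}'' applied to the ratio \eqref{def: ConditionalGap-pro}, which is precisely what you do. Your extra care in observing that $D_n(s_1,s_2;p,p)$ is literally a one-jump determinant (so that \eqref{eq: HAsy-reduced}/\cite{BC} applies directly at $t_1$, not merely as a degenerate limit) is a welcome clarification the paper leaves implicit.
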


\subsection*{Asymptotics of the coupled Painlev\'{e} IV system}
Next, we show that  the scaling limit of the coupled Painlev\'e IV system leads to the  coupled Painlev\'e II system.
\begin{thm} \label{thm:CPIVAsy}
  Let $s_k$, $\omega_k$, $k=1,2$ be as in Theorem \ref{thm:HankelAsy} and $x=(s_1+s_2)/2$, $s=(s_2-s_1)/2$,
then we have the asymptotics of the coupled Painlev\'e IV system as $n\to\infty$
\begin{gather}\label{}
a_1(x;s)=-\frac{1}{\sqrt{2}n^{1/6}}\left(v_1(t_1;t_2-t_1)+
\frac{v_{1x}(t_1;t_2-t_1)}{2n^{1/3}}
+O(n^{-2/3})\right),\label{asy-cpiv-a-1}\\
a_2(x;s)=-\frac{1}{\sqrt{2}n^{1/6}}\left(v_2(t_1;t_2-t_1)+
\frac{v_{2x}(t_1;t_2-t_1)}{2n^{1/3}}
+O(n^{-2/3})\right),\label{asy-cpiv-a-2}\\
 b_1(x;s)=\sqrt{2n}\left(1-\frac{v_{1x}(t_1;t_2-t_1)}{2v_{1}(t_1;t_2-t_1)n^{1/3}}+O(n^{-2/3})\right),\label{asy-cpiv-b-1}\\
  b_2(x;s)=\sqrt{2n}\left(1-\frac{v_{2x}(t_1;t_2-t_1)}{2v_{2}(t_1;t_2-t_1)n^{1/3}}+O(n^{-2/3})\right),\label{asy-cpiv-b-2}\\
  y(x;s)=2i(2/n)^{n-\frac 12}e^{-n-(t_1+t_2)n^{1/3}}\left(1+\frac{H_{\texttt{II}}(t_1;t_2-t_1)-\frac{1}{8}(t_1+t_2)^2}{n^{1/3}}
+O(n^{-2/3})\right),\label{asy-cpiv-y}
\end{gather}
where  $v_1(x;s)$ and $v_2(x;s)$ are solutions to the   coupled Painlev\'e II system  \eqref{int-equation v}  subject to the boundary conditions \eqref{eq:u-asy}, $H_{II}(x;s)$ is the Hamiltonian associated to these solutions   and the
subscript $x$ in $v_{kx}(x;s)$ denotes the derivative of $v_k(x;s)$ with respect to $x$ for  $k=1,2$.
\end{thm}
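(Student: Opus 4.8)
The plan is to combine the exact finite-$n$ identities of Theorem~\ref{thm:HankelFixedn} with the Deift--Zhou steepest-descent analysis of the Riemann--Hilbert problem for the orthogonal polynomials that underlies Theorem~\ref{thm:HankelAsy}, carried to higher order. A first observation roughly halves the work: the first two equations of the Hamiltonian system \eqref{int: HIV-system} give $b_1=-\frac{a_{1x}}{2a_1}-(a_1+a_2)+(x-s)$ and $b_2=-\frac{a_{2x}}{2a_2}-(a_1+a_2)+(x+s)$, so once \eqref{asy-cpiv-a-1}--\eqref{asy-cpiv-a-2} have been proved the expansions \eqref{asy-cpiv-b-1}--\eqref{asy-cpiv-b-2} follow by differentiating in $x$: the term $(x\mp s)=s_{1,2}=\sqrt{2n}+O(n^{-1/6})$ supplies the leading $\sqrt{2n}$ of $b_k$; since $t_1=\sqrt2\,n^{1/6}(s_1-\sqrt{2n})$ while $t_2-t_1=2\sqrt2\,n^{1/6}s$ is fixed, $x$-differentiation of $v_k(t_1;t_2-t_1)$ produces a factor $\sqrt2\,n^{1/6}$; and the coupled Painlev\'e~II relation $v_{kx}=2v_kw_k$ turns $-\frac{a_{kx}}{2a_k}=-\frac{n^{1/6}}{\sqrt2}\frac{v_{kx}}{v_k}+\cdots$ into exactly the correction $-\sqrt{2n}\,\frac{v_{kx}}{2v_k\,n^{1/3}}$. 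It therefore remains to establish \eqref{asy-cpiv-a-1}--\eqref{asy-cpiv-a-2} and \eqref{asy-cpiv-y}.

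For $a_1$ and $a_2$ I would exploit that in the steepest-descent analysis the local parametrix near the soft edge $\sqrt{2n}$ is constructed from a model Riemann--Hilbert problem which, once the two jumps approach the edge at the rate $n^{-1/6}$, is exactly the problem $\Psi(\,\cdot\,;t_2-t_1)$ attached to the coupled Painlev\'e~II system of Proposition~\ref{pro:H}; passing from the coupled-Painlev\'e-IV model $\Phi$ behind Theorem~\ref{thm:HankelFixedn} to $\Psi$ is the confluence of the two Lax pairs, parallel to the classical degeneration of \eqref{eq:PIV} into \eqref{eq:pII}. Matching $\Phi$ with $\Psi$ through the global and local parametrices and the remainder $R$, which satisfies $R=I+n^{-1/3}R^{(1)}+O(n^{-2/3})$ with $R^{(1)}$ explicit in terms of the subleading coefficient of $\Psi$ at infinity (whose entries are the $v_k$ and $w_k$), one reads off $a_k=-v_k(t_1;t_2-t_1)/(\sqrt2\,n^{1/6})+\cdots$ at leading order, while the $n^{-1/3}$-correction is contributed by $R^{(1)}$, the relevant entry of which is proportional to $v_kw_k=\tfrac12 v_{kx}$. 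This yields \eqref{asy-cpiv-a-1}--\eqref{asy-cpiv-a-2}, the error $O(n^{-2/3})$ coming from the next order of the estimate for $R$.

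For $y$ I would start from $y=4\pi i\,e^{-x^2}\gamma_{n-1}^2$, which is \eqref{thm: gamma-0}, and factor $\gamma_{n-1}^2=\gamma_{n-1}^{2,\texttt{GUE}}\,(\gamma_{n-1}^2/\gamma_{n-1}^{2,\texttt{GUE}})$. By \eqref{eq:DGamma} and \eqref{def: Hankel-GUE}, $\gamma_{n-1}^{2,\texttt{GUE}}=D_{n-1}^{\texttt{GUE}}/D_n^{\texttt{GUE}}=2^{n-1/2}/(\sqrt{2\pi}\,(n-1)!)$, which by Stirling's formula equals $\frac1{2\pi}(2/n)^{n-1/2}e^{n}(1+O(1/n))$; the ratio $\gamma_{n-1}^2/\gamma_{n-1}^{2,\texttt{GUE}}=1+H_{\texttt{II}}(t_1;t_2-t_1)/n^{1/3}+O(n^{-2/3})$ comes from the refined steepest-descent asymptotics, using $\frac{d}{dx}H_{\texttt{II}}=-(v_1+v_2)$ from \eqref{eq: DHaml} (equivalently, it is forced by $\frac{d}{dx}\ln\gamma_{n-1}=a_1+a_2$ from \eqref{thm: gamma} together with \eqref{asy-cpiv-a-1}--\eqref{asy-cpiv-a-2}, a convenient internal check). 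Finally, with $x=\sqrt{2n}+(t_1+t_2)/(2\sqrt2\,n^{1/6})$ one has the exact identity $x^2=2n+(t_1+t_2)n^{1/3}+(t_1+t_2)^2/(8n^{1/3})$, hence $e^{-x^2}=e^{-2n}e^{-(t_1+t_2)n^{1/3}}(1-\tfrac{(t_1+t_2)^2}{8n^{1/3}}+O(n^{-2/3}))$. Multiplying the three factors produces \eqref{asy-cpiv-y}: the $e^{-2n}$ combines with the $e^{n}$ from Stirling to give $e^{-n}$, the $4\pi i$ combines with $\frac1{2\pi}$ to give $2i$, and the two $O(n^{-1/3})$ pieces add to $H_{\texttt{II}}(t_1;t_2-t_1)-\tfrac18(t_1+t_2)^2$.

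The main obstacle is the refined matching step: pushing the steepest-descent analysis past the $O(n^{-1/6})$ accuracy of Theorem~\ref{thm:HankelAsy} to pin down the coefficient $R^{(1)}$, and checking that the several individually scheme-dependent $n^{-1/3}$-contributions collapse to the invariant quantities $v_{kx}/(2n^{1/3})$ in $a_k$ and $H_{\texttt{II}}(t_1;t_2-t_1)$ in $y$. Conceptually, the heart of the matter is to set up the soft-edge local parametrix so that it equals the coupled Painlev\'e~II model $\Psi$ exactly, and to control the confluence of the coupled-Painlev\'e-IV Lax pair uniformly as $s=(s_2-s_1)/2\to0$; once that is in place the remaining work is bookkeeping, with \eqref{thm: F-H}, \eqref{thm: gamma} and the agreement of the two derivations of $y$ serving as consistency checks.
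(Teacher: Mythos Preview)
Your overall strategy is sound and overlaps substantially with the paper's, but several details differ and one attribution is wrong.

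\emph{Where you and the paper diverge.} For $b_k$ the paper does \emph{not} differentiate the $a_k$-asymptotics; it computes $b_k$ directly from the same residue formula as $a_k$, namely $a_kb_k=\lim_{z\to\mp s}(z\pm s)(\Phi_z\Phi^{-1})_{11}$, and evaluates this through the local parametrix in exactly the same way. Your ODE shortcut $b_k=-\tfrac{a_{kx}}{2a_k}-(a_1+a_2)+(x\mp s)$ is legitimate and arguably cleaner, but you must justify differentiating the asymptotic expansion of $a_k$ in $x$ (uniform analyticity of the error in $t_1$ on compacts suffices). For $y$ the paper reads $(Y_1)_{21}$ straight off the large-$z$ expansion $Y_1=\sqrt{2n}\,(2n)^{n\sigma_3/2}e^{nl\sigma_3/2}(R_1+N_1)e^{-nl\sigma_3/2}(2n)^{-n\sigma_3/2}$ with $R_1=\tfrac{1}{2n^{1/3}}H_{\texttt{II}}(\sigma_3-i\sigma_1)+O(n^{-2/3})$ and $N_1=-\tfrac14(1+\lambda_1)\sigma_2$; your Stirling route through $\gamma_{n-1}^{2,\texttt{GUE}}$ gives the same answer and is a reasonable alternative.

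\emph{The genuine slip.} Your claim that the $n^{-1/3}$ correction in $a_k$ ``is contributed by $R^{(1)}$, the relevant entry of which is proportional to $v_kw_k=\tfrac12 v_{kx}$'' is incorrect on both counts. In the paper $R^{(1)}$ is proportional to $H_{\texttt{II}}(\sigma_3-i\sigma_1)$, not to $v_kw_k$, and its contribution to the $(21)$-entry governing $a_k$ cancels at leading order (the commutator $[R^{(1)},EME^{-1}]_{21}$ is one order smaller than naively expected because $R^{(1)}$ and the leading part of $EME^{-1}$ share the $\sigma_3-i\sigma_1$ structure). The actual source of the $\tfrac{v_{kx}}{2n^{1/3}}$ term is the local parametrix itself: when the residue matrix $\tfrac{1-\omega_1}{2\pi i}\hat P_0\left(\begin{smallmatrix}0&1\\0&0\end{smallmatrix}\right)\hat P_0^{-1}$ is conjugated by the analytic prefactor $E(\lambda_k)$, the mixing by $\tfrac{1}{\sqrt2}(I-i\sigma_1)n^{\sigma_3/6}$ produces the cross term $i(\hat P_0)_{11}(\hat P_0)_{21}$, and the identity $\tfrac{1-\omega_1}{2\pi i}(\hat P_0)_{11}(\hat P_0)_{21}=-\tfrac{v_{1x}}{2}$ from the coupled Painlev\'e~II Lax pair turns this into the stated correction. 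The two $H_{\texttt{II}}$ contributions (from $E$ and from the $1/y$ prefactor) then cancel. So your identification of the mechanism needs to be reworked; once you compute $a_k$ through $E(\lambda_k)$ and the $\hat P_0$-relations rather than through $R^{(1)}$, the rest of your plan goes through.
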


\subsection*{Asymptotics of the orthogonal polynomials }
Applying Theorem \ref{thm:HankelFixedn} and \ref{thm:CPIVAsy}, we  obtain the asymptotics of the recurrence coefficients, leading coefficients of the orthogonal polynomials and the
values of the orthogonal polynomials at $s_1$ and $s_2$.

\begin{thm}\label{thm:OpAsy}
  Let $s_k$, $\omega_k$, $k=1,2$ be as in Theorem \ref{thm:HankelAsy},
 we have the asymptotics of the recurrence coefficients and leading coefficients of the orthogonal polynomials as $n\to\infty$
  \begin{gather}
  \alpha_n=-\frac {1}{\sqrt{2}}\left(v_1(t_1;t_2-t_1)+v_2(t_1;t_2-t_1)\right)n^{-1/6}+O(n^{-1/2}),\label{an-asymptotic-expansion}\\
   \beta_n=\frac{1}{\sqrt{2}}n^{1/2}-2^{-3/2}\left(v_1(t_1;t_2-t_1)+v_2(t_1;t_2-t_1)\right)n^{-1/6}+O(n^{-1/2}),\label{bn-asymptotic-expansion}\\
\gamma_{n-1}=2^{\frac n2-\frac 34}n^{\frac 14-\frac n2}e^{\frac n2}\pi ^{-1/2}\left(1+\frac{1}{2}H_{\texttt{II}}(t_1;t_2-t_1)n^{-1/3}+O(n^{-2/3})\right).
\end{gather}
Moreover, we derive the asymptotics of  the
values of the orthogonal polynomials at $s_k$  as $n\to\infty$
\begin{gather}\label{}
\pi_n(s_1)=\left(\frac{2\pi }{1-\omega_1}\right)^{1/2}\left(\frac{ne}{2}\right)^{n/2}n^{1/6}e^{t_1n^{1/3}}u_1(t_1;t_2-t_1)(1+O(n^{-1/3})),\label{pns1-asymptotic-expansion}\\
\pi_n(s_2)=\left(\frac{2\pi }{\omega_1-\omega_2}\right)^{1/2}\left(\frac{ne}{2}\right)^{n/2}n^{1/6}e^{t_2n^{1/3}}u_2(t_1;t_2-t_1)(1+O(n^{-1/3})).
\label{pns2-asymptotic-expansion}\end{gather}
Here  $v_k(x;s)$ and $u_k(x;s)$, $k=1,2$ are solutions to   \eqref{int-equation v} and \eqref{int-equation u}  subject to the boundary conditions \eqref{eq:u-asy}, $H_{II}(x;s)$ is the Hamiltonian correponding to these solutions.
\end{thm}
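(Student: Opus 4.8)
\medskip
\noindent\textbf{Proof strategy for Theorem~\ref{thm:OpAsy}.}
The plan is to treat this theorem as a corollary of Theorems~\ref{thm:HankelFixedn} and~\ref{thm:CPIVAsy}: the identities \eqref{thm:alpha}--\eqref{thm:pns2} express $\alpha_n$, $\beta_n$, $\gamma_{n-1}$, $\pi_n(s_1)$, $\pi_n(s_2)$ as explicit algebraic combinations of $a_k(x;s)$, $b_k(x;s)$, $y(x;s)$ at $x=(s_1+s_2)/2$, $s=(s_2-s_1)/2$, while Theorem~\ref{thm:CPIVAsy} gives the large-$n$ expansions of precisely those quantities in terms of the coupled Painlev\'e~II data $v_k$, $v_{kx}$, $u_k$, $H_{\texttt{II}}$ evaluated at $(t_1;t_2-t_1)$. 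So I would substitute \eqref{asy-cpiv-a-1}--\eqref{asy-cpiv-y} into \eqref{thm:alpha}--\eqref{thm:pns2} and expand. Two elementary facts are used throughout: with $s_k=\sqrt{2n}+t_k/(\sqrt2\,n^{1/6})$ one has $s=(t_2-t_1)/(2\sqrt2\,n^{1/6})=O(n^{-1/6})$ and $x=\sqrt{2n}+(t_1+t_2)/(2\sqrt2\,n^{1/6})$, hence $x^2=2n+(t_1+t_2)n^{1/3}+\tfrac18(t_1+t_2)^2n^{-1/3}$ exactly; and $\sqrt{2n}/(\sqrt2\,n^{1/6})=n^{1/3}$, so that $b_k\sim\sqrt{2n}$ and $a_k\sim -v_k/(\sqrt2\,n^{1/6})$ give $a_kb_k\sim -n^{1/3}v_k$ and $a_kb_k^2\sim -\sqrt2\,n^{5/6}v_k$. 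Uniformity of the errors over compact $(t_1,t_2)$-sets is inherited, since every step is a product, a division by a quantity bounded away from $0$, or a square root of such a quantity.

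First I would do the recurrence coefficients. Feeding the above into \eqref{thm: beta} gives $\beta_n^2=\tfrac12(a_1b_1+a_2b_2+n)=\tfrac n2\bigl(1-n^{-2/3}(v_1+v_2)+\cdots\bigr)$, and taking the square root produces \eqref{bn-asymptotic-expansion}, the constant $2^{-3/2}$ coming from $\tfrac1{\sqrt2}\cdot\tfrac12$. For \eqref{an-asymptotic-expansion} one divides the numerator $a_1b_1^2+a_2b_2^2\sim -\sqrt2\,n^{5/6}(v_1+v_2)$ of \eqref{thm:alpha} by the denominator $2(a_1b_1+a_2b_2+n)\sim 2n$, giving $-\tfrac1{\sqrt2}(v_1+v_2)n^{-1/6}$. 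To secure the stated error $O(n^{-1/2})$ one must carry the $n^{-1/3}$-corrections of \eqref{asy-cpiv-a-1}--\eqref{asy-cpiv-b-2} through both computations; a helpful check is that the $O(1)$ term in $a_kb_k$ cancels, after which the first non-leading contribution is seen to sit at order $n^{-1/2}$.

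Next I would treat $\gamma_{n-1}$ via \eqref{thm: gamma-0}, $\gamma_{n-1}^2=\tfrac1{4\pi i}e^{x^2}y(x;s)$, feeding in \eqref{asy-cpiv-y} together with the exact expansion of $e^{x^2}$ above. The point is that the $\tfrac18(t_1+t_2)^2$ produced by $e^{x^2}$ cancels the $-\tfrac18(t_1+t_2)^2$ inside the bracket of \eqref{asy-cpiv-y}, leaving
\begin{equation*}
e^{x^2}y(x;s)=2i\left(\frac2n\right)^{n-\frac12}e^{n}\left(1+H_{\texttt{II}}(t_1;t_2-t_1)\,n^{-1/3}+O(n^{-2/3})\right),
\end{equation*}
so $\gamma_{n-1}^2=\tfrac1{2\pi}(2/n)^{n-1/2}e^{n}\bigl(1+H_{\texttt{II}}n^{-1/3}+\cdots\bigr)$; taking the positive square root (legitimate since $\gamma_{n-1}>0$) and rewriting $(2/n)^{n/2-1/4}e^{n/2}=2^{n/2-3/4}n^{1/4-n/2}e^{n/2}$ gives the stated formula $\gamma_{n-1}=2^{n/2-3/4}n^{1/4-n/2}e^{n/2}\pi^{-1/2}\bigl(1+\tfrac12H_{\texttt{II}}(t_1;t_2-t_1)n^{-1/3}+O(n^{-2/3})\bigr)$.

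Finally, for the values at the jumps I would substitute into \eqref{thm:pns1}: $\pi_n(s_1)^2=\tfrac{2\pi i}{\omega_1-1}e^{-2sx+s^2}\,a_1b_1^2/y$. Here $a_1b_1^2\sim -\sqrt2\,n^{5/6}v_1=-\sqrt2\,n^{5/6}u_1^2$; since $s=O(n^{-1/6})$ and $x=\sqrt{2n}+O(n^{-1/6})$ one gets $-2sx+s^2=-(t_2-t_1)n^{1/3}+O(n^{-1/3})$, so combining this exponential with the $e^{n+(t_1+t_2)n^{1/3}}$ coming from $1/y$ in \eqref{asy-cpiv-y} yields $e^{n}e^{2t_1n^{1/3}}$. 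Simplifying the algebraic prefactor by $(n/2)^{n-1/2}=2^{1/2-n}n^{n-1/2}$ and $2^{-n/2}n^{n/2}e^{n/2}=(ne/2)^{n/2}$ then produces \eqref{pns1-asymptotic-expansion}; the computation for \eqref{pns2-asymptotic-expansion} is identical, starting from \eqref{thm:pns2}, after the substitutions $s_1\leftrightarrow s_2$, $\omega_1-1\to\omega_2-\omega_1$, $(a_1,b_1,u_1)\to(a_2,b_2,u_2)$. The branch of the square root is fixed by the facts that both sides are real, that $u_k=\sqrt{v_k}$ is the nonnegative root selected in Proposition~\ref{pro:H}, and that the prefactors in \eqref{pns1-asymptotic-expansion}--\eqref{pns2-asymptotic-expansion} are real and positive for the admissible parameters~\eqref{def:Para}; the residual overall sign is pinned down by continuity (e.g.\ by matching the one-jump limit $\omega_1\to\omega_2$ of Remark~\ref{rem:PIV}, or by the Riemann--Hilbert construction behind Theorem~\ref{thm:HankelFixedn}). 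The only genuinely delicate points in the whole argument are this square-root bookkeeping in \eqref{thm: gamma-0}, \eqref{thm:pns1}, \eqref{thm:pns2} and the verification that the intermediate $O(1)$ and $O(n^{-1/3})$ terms in \eqref{an-asymptotic-expansion}--\eqref{bn-asymptotic-expansion} cancel or are absorbed at the claimed order; everything else reduces to expanding elementary functions of $\sqrt{2n}$ and $n^{1/6}$.
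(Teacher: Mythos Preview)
Your proposal is correct and follows essentially the same route as the paper: substitute the Painlev\'e~IV asymptotics \eqref{asy-cpiv-a-1}--\eqref{asy-cpiv-y} into the finite-$n$ identities \eqref{thm:alpha}--\eqref{thm:pns2} of Theorem~\ref{thm:HankelFixedn} and expand. The only cosmetic difference is that for $\gamma_{n-1}$ the paper goes back to $(Y_1)_{21}$ via \eqref{recurrence coefficients and Y} and the Riemann--Hilbert data \eqref{N-expand}, \eqref{R1-expand}, \eqref{Y1} directly, rather than through \eqref{thm: gamma-0} and \eqref{asy-cpiv-y} as you do; since \eqref{asy-cpiv-y} is itself obtained from $(Y_1)_{21}$, the two arguments are equivalent.
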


\begin{rem} \label{rem:OPOneJump}
When $s_1\to s_2$, the weight function \eqref{weight} is reduced to the Gaussian weight with one jump discontinuity.  Then, Theorem \ref{thm:OpAsy}, together with Remark \ref{rem:HankelAsy}, implies the asymptotics of the recurrence coefficients, leading coefficients and  the orthogonal polynomials associated with  Gaussian weight with one jump discontinuity. This agrees with a result from \cite[Theorem 5]{BC}.
\end{rem}
\subsection{Organization of the rest of this paper}

The rest of the paper is organized as follows.  In Section \ref{OPCPIV}, we consider the Riemann-Hilbert (RH) problem  for the orthogonal polynomials associated with the Gaussian weight with two jump discontinuities  \eqref{weight}.  We show that the RH problem is equivalent to the one  for the coupled Painlev\'e IV system. The properties of the Painlev\'e IV system are studied, including the Lax pair and the Hamiltonian formulation.   We then prove Theorem \ref{thm:HankelFixedn}  at the end of  this section which relates the Hankel determinants and the orthogonal polynomials to the coupled Painlev\'e IV system.  In section \ref{OPAsy}, we study the asymptotics of the orthogonal polynomials by performing Deift-Zhou steepest descent analysis
 of the RH problem for the  orthogonal polynomials. Finally, the proofs of Theorem \ref{thm:HankelAsy}-\ref{thm:OpAsy} are given in Section \ref{ProofsofTherorem}.

\section{Orthogonal polynomials and the coupled Painlev\'e IV system}\label{OPCPIV}
In this section, we will relate the the Hankel determinants and  the orthogonal polynomials associated with the weight function  \eqref{weight}  to the coupled Painlev\'e IV system. The cennections are collected in Theorem \ref{thm:HankelFixedn}. The derivations  are based on the RH problem representation of the orthogonal polynomials.
\subsection{Riemann-Hilbert problem for the orthogonal polynomials}
In this subsection,  we first consider the RH problem for the orthogonal polynomials with respect to \eqref{weight},  which was introduced by Fokas, Its and Kitaev \cite{Fokas}. We  then derive several identities relating the logarithmic derivative of the Hankel determinants to the
RH problem.  At the end of the subsection, we transform the RH problem to a model RH problem with constant jumps.

\subsection*{Riemann-Hilbert problem  for  $Y$ }

\begin{description}
  \item(a)~~  $Y(z;s_1,s_2)$ ($Y(z)$ for short) is analytic in
  $\mathbb{C}\backslash \mathbb{R}$;

  \item(b)~~  $Y(z)$  satisfies the jump condition
  $$Y_+(x)=Y_-(x) \left(
                               \begin{array}{cc}
                                 1 & w(x) \\
                                 0 & 1 \\
                                 \end{array}
                             \right),
\qquad x\in \mathbb{R},$$ where $w(x)=w(x;s_1,s_2;\omega_1,\omega_2)$ is defined in \eqref{weight};

  \item(c)~~  The behavior of $Y(z)$ at infinity is
  \begin{equation}\label{Y-infinity}Y(z)=\left (I+\frac {Y_{1}}{z}+O\left (\frac 1 {z^2}\right )\right )\left(
                               \begin{array}{cc}
                                 z^n & 0 \\
                                 0 & z^{-n} \\
                               \end{array}
                             \right),\quad \quad z\rightarrow
                             \infty ;\end{equation}
\item(d)~~  $Y(z)=O(\ln |z-s_k|) $ as $z\to s_k$ for $k=1,2$.
\end{description}

For $\omega_k\geqslant 0$, $k=1,2$,  it follows from the Sokhotski-Plemelj formula and Liouville's theorem that the unique solution of the RH problem for $Y$ is given by
\begin{equation}\label{Y}
Y(z)= \left (\begin{array}{cc}
\pi_n(z)& \frac 1{2\pi i}\int_\mathbb{R} \frac{\pi_n(x) w(x)}{x-z}dx\\
-2\pi i \gamma_{n-1}^2 \;\pi_{n-1}(z)& - \gamma_{n-1}^2\;
\int_\mathbb{R} \frac{\pi_{n-1}(x) w(x)}{x-z}dx\end{array} \right ),
\end{equation}
where  $\pi_n(z)$ and  $\gamma_{n-1}$ are defined in \eqref{def: Orth}; see  \cite{Fokas}.

We establish two differential identities  expressing the  logarithmic derivative  of the  Hankel determniant $D_n$ in terms of the solution  $Y$.
\begin{pro}\label{Pro:Diff} Let $s_k$ and $\omega_k$, $k=1,2$ be as in \eqref{def:Para} and $F(s_1, s_2)$ be  the  logarithmic derivative  of the  Hankel determinant  as defined in \eqref{def:F},
 we have the following  relations
\begin{equation}\label{def: DiffId-1}
 F(s_1,s_2)=\frac{1-\omega_1}{2\pi i}e^{-s_1^2}(Y^{-1}Y')_{21}(s_1)+\frac{\omega_1 -\omega_2}{2\pi i}e^{-s_2^2}(Y^{-1}Y')_{21}(s_2),
  \end{equation}
  and
  \begin{equation}\label{def: DiffId-2}
 F(s_1,s_2)= 2\lim_{z\to\infty}z(Y(z)z^{-n\sigma_3}-I)_{11},
  \end{equation}
  where $Y$ is defined in \eqref{Y}.
    \end{pro}
\begin{proof}

According to \eqref{def:F}, it follows by taking logarithmic derivative on both sides of the equation \eqref{eq:DGamma}  that
\begin{align}\label{dF-0}
F(s_1,s_2)&=-2\sum_{j=0}^{n-1}\gamma_j^{-1}\left(\frac{\partial\gamma_j }{\partial s_1}+\frac{\partial\gamma_j }{\partial s_2}\right)\\ \label{dF-1}
& =\sum_{j=0}^{n-1}((1-\omega_1)e^{-s_1^2}\gamma_j^{2}\pi_j(s_1)^2+(\omega_1-\omega_2)e^{-s_2^2}\gamma_j^{2}\pi_j(s_2)^2).
  \end{align}
Applying the Christoffel-Darboux identity, we obtain
\begin{align}\label{dF-2}
 F(s_1,s_2)=& (1-\omega_1)e^{-s_1^2}\gamma_{n-1}^{2}(\pi_{n}'(s_1)\pi_{n-1}(s_1) -\pi_{n}(s_1)\pi'_{n-1}(s_1))\nonumber\\
&~  +(\omega_1-\omega_2)e^{-s_2^2}\gamma_{n-1}^{2}(\pi_{n}'(s_2)\pi_{n-1}(s_2)-\pi_{n}(s_2)\pi'_{n-1}(s_2)).
  \end{align}
 Then, the differential identity \eqref{def: DiffId-1} follows from the definition of $Y$ and \eqref{dF-2}.

 To prove \eqref{def: DiffId-2}, we use a change of variable in \eqref{def: Orth} and obtain
\begin{equation}\label{leading coef1}
\gamma_{j}^{-2}=\gamma_{j}(s_1,s_2)^{-2}=\int_{\mathbb{R}}\pi_j(x)^2w(x)dx=\int_{\mathbb{R}}\pi_j(x+s_k)^2w(x+s_k)dx, \quad k=1,2.
\end{equation}
Taking derivative with respect to $s_k$   on both sides of \eqref{leading coef1} for $k=1,2$ and using the orthogonality and the definition of the weight function \eqref{weight}, we have
\begin{align}\label{leading coef2}-2\gamma_{j}^{-1}\frac \partial {\partial s_1}\gamma_{j}
&=-2\gamma_{j}^2\int_{\mathbb{R}}x\pi_j(x)^2w(x)dx-
(\omega_1-\omega_2)e^{-s_2^2}\gamma_j^{2}\pi_j(s_2)^2,
\end{align}
and
\begin{align}\label{leading coef3}-2\gamma_{j}^{-1}\frac \partial {\partial s_2}\gamma_{j}
&=-2\gamma_{j}^2\int_{\mathbb{R}}x\pi_j(x)^2w(x)dx-
(1-\omega_1)e^{-s_1^2}\gamma_j^{2}\pi_j(s_1)^2.
\end{align}
Combining  the formulas with   \eqref{dF-0}-\eqref{dF-1} and using the Christoffel-Darboux formula once again, we obtain
\begin{equation}\label{eq:F-Pi}F(s_1,s_2)=-2\gamma_{n-1}^2\int_{\mathbb{R}}x\left(\frac d {dx}\pi_n(x) \pi_{n-1}(x)-\pi_n(x) \frac d {dx}\pi_{n-1}(x)\right)w(x)dx.
\end{equation}
From
$$\pi_n(x)=x^n+p_nx^{n-1}+\cdots, $$
we have the decomposition
$$x\frac d {dx} \pi_n(x)=n\pi_n(x)-p_n\pi_{n-1}(x)+\cdots.$$
Substituting this into \eqref{eq:F-Pi} and using the orthogonality, we obtain \eqref{def: DiffId-2}. This completes Proposition \ref{Pro:Diff}.
\end{proof}

 We define
 \begin{equation}\label{eq: Phi} \Phi(z;x,s)=\sigma_1e^{\frac{x^2}{2}\sigma_3} Y(z+x)e^{- \frac{1}{2}(z+x )^2\sigma_3}\sigma_1,
  \end{equation}
  where the variables $x,s$ are related to $s_1$ and $s_2$ by \eqref{def:x}.
Then $\Phi(z)=\Phi(z;x,s)$ satisfies the following RH problem.

\subsection*{Riemann-Hilbert problem for $\Phi$}
\begin{description}
  \item(a)~~  $\Phi(z)$ is analytic in
  $\mathbb{C}\backslash \mathbb{R}$;

  \item(b)~~  $\Phi(z)$  satisfies the jump condition
  $$\Phi_+(z)=\Phi_-(z) \left(
                               \begin{array}{cc}
                                1 & 0 \\
                                 1 & 1 \\
                                 \end{array}
                             \right), \quad z<-s, $$
                             $$\Phi_+(z)=\Phi_-(z) \left(
                               \begin{array}{cc}
                                 1  & 0 \\
                                \omega_1 &1 \\
                                 \end{array}
                             \right), \quad -s<z<s;
$$
$$ \Phi_+(z)=\Phi_-(z) \left(
                               \begin{array}{cc}
                                1 & 0 \\
                               \omega_2 &  1 \\
                                 \end{array}
                             \right), \quad z>s;
$$

  \item(c)~~  The behavior of $\Phi(z)$ at infinity is
  \begin{equation}\label{Phi-infinity}\Phi(z)=\left (I+\frac {\Phi_{1}}{z}+\frac {\Phi_{2}}{z^2}+O\left (\frac 1 {z^3}\right )\right )e^{(\frac{1}{2}z^2+x z)\sigma_3}z^{-n\sigma_3};\end{equation}

\item(d)~~ The behavior of $ \Phi(z)$ near $-s$ is
\begin{equation}\label{Phi--s}
\Phi(z)=\Phi^{(-s)}(z)\left (I+ \frac{1-\omega_1}{2\pi i}\left(\begin{array}{cc} 0 & 0\\ 1 & 0\end{array}\right)\ln (z+s)\right )E^{(-s)},
\end{equation}
where $\arg(z+s)\in(-\pi,\pi)$. Here,   $\Phi^{(-s)}(z)$ is analytic near $z=-s$  and has the following expansion

\begin{equation}\label{def:P-0}
\Phi^{(-s)}(z)=P_0(x, s)(I+P_1(x, s)(z+s)+O((z+s)^2)).\end{equation}
The piecewise constant matrix $E^{(-s)}$ is given by
 \begin{equation*}
 E^{(-s)}=\left\{\begin{array}{ll}
 \left(\begin{array}{cc} 1 & 0\\ 0 & 1\end{array}\right), & \Im z>0,\\
 \left(\begin{array}{cc} 1 & 0\\ -\omega_1 & 1\end{array}\right),  & \Im z<0.
 \end{array}\right.
 \end{equation*}
\item(e)~~ The behavior of $ \Phi(z)$ near $s$ is
\begin{equation}\label{Phi-s}
\Phi(z)=\Phi^{(s)}(z)\left (I+ \frac{\omega_1-\omega_2}{2\pi i}\left(\begin{array}{cc} 0 & 0\\ 1 & 0\end{array}\right)\ln (z-s)\right )E^{(s)},
\end{equation}
where $\arg(z-s)\in(-\pi,\pi)$. Here,  $\Phi^{(s)}(z)$ is analytic near $z=s$  and has the following expansion
\begin{equation}\label{def:Q-0}
\Phi^{(s)}(z)=Q_0(x,s)(I+Q_1(x,s)(z-s)+O((z-s)^2)).\end{equation}
The piecewise  constant matrix $E^{(s)}$ is defined by
 \begin{equation*}
 E^{(s)}=\left\{\begin{array}{ll}
 \left(\begin{array}{cc} 1 & 0\\ 0 & 1\end{array}\right), & \Im z>0,\\
 \left(\begin{array}{cc} 1 & 0\\ -\omega_2 & 1\end{array}\right),  & \Im z<0.
 \end{array}\right.
 \end{equation*}

\end{description}


\subsection{Lax pair and the coupled Painlev\'e IV system}
In this section, we show that the solution $\Phi(z;x,s)$  of the RH problem  satisfies a system of
differential equations in $z$ and $x$ when the parameter $s$ is fixed. The compatibility condition $\Phi_{zx}(z;x,s)=\Phi_{xz}(z;x,s)$ gives us the
coupled Painlev\'e IV system. The Hamiltonian for the system is also derived.

\begin{pro} \label{pro:Laxpair} We have the following  Lax pair
\begin{equation}\label{def: Lax pair}
 \Phi_z(z;x,s)=A(z;x,s) \Phi(z;x,s),  \quad   \Phi_x(z;x,s)=B(z;x,s) \Phi(z;x,s),
  \end{equation}
  where
  \begin{equation}\label{eq: A}
 A(z;x,s)=(z+x)\sigma_3+A_{\infty}(x,s)+\frac{A_1(x,s)}{z+s}+\frac{A_2(x,s)}{z-s},  \end{equation}
  \begin{equation}\label{eq: B}
 B(z;x,s)=z\sigma_3+A_{\infty}(x,s), \end{equation}
with the coefficients  given below
   \begin{equation}\label{eq:A-infty}
 A_{\infty}(x,s)= \begin{pmatrix}
                    0& y(x;s)\\
                 -2 \left(a_1(x;s)b_1(x;s)+a_2(x;s)b_2(x;s)+n\right)/y(x;s) &  0
                  \end{pmatrix}, \end{equation}
  \begin{equation}\label{eq:A-k}
 A_k(x,s)= \begin{pmatrix}
                   a_k(x;s)b_k(x;s) &  a_k(x;s)y(x;s)\\
               -a_k(x;s)b_k^2(x;s)/y(x;s)&  -a_k(x;s)b_k(x;s)              \end{pmatrix},  \quad k=1,2. \end{equation}
The compatibility condition of the Lax pair gives us the coupled Painlev\'e IV system  \begin{equation}\label{eq:CPIV}
\left\{\begin{array}{l}
\frac{d y}{d x}=2(a_1+a_2-x)y,\\
\frac{d a_1}{d x}=-2a_1(a_1+a_2+b_1-x+s),\\
\frac{d a_2}{d x}=-2a_2(a_1+a_2+b_2-x-s),\\
\frac{d b_1}{d x}=b_1^2+2b_1(2a_1+a_2-x+s)+2(a_2b_2+n),\\
\frac{d b_2}{d x}=b_2^2+2b_2(a_1+2a_2-x-s)+2(a_1b_1+n).
\end{array}\right.
\end{equation}
Eliminating $b_1$ and $b_2$ from the system, it is seen that $a_1$ and $a_2$ satisfy the following nonlinear differential equations
\begin{equation}\label{eq:ak}
\left\{\begin{array}{l}
       \frac{\mathtt{d} ^2a_1 }{\mathtt{d} x^2}-\frac 1{2a_1} \left(\frac{\mathtt{d} a_1}{\mathtt{d} x}\right)^2-6a_1(a_1+a_2)^2+8a_1(a_1+a_2)x-8a_1^2s+2(2n-1)a_1-2a_1(x-s)^2=0,\\
         \frac{\mathtt{d}^2a_2}{\mathtt{d} x^2}-\frac 1{2a_2} \left(\frac{\mathtt{d} a_2}{\mathtt{d} x}\right)^2-6a_2(a_1+a_2)^2+8a_2(a_1+a_2)x+8a_2^2s+2(2n-1)a_2-2a_2(x+s)^2=0.
       \end{array}
\right.
\end{equation}

    \end{pro}
 \begin{proof}
 Since the  jump matrices of the RH problem for $\Phi(z;x,s)$ are independent of the variables $z$ and $x$,
 we have that $\Phi_z(z;x,s)$, $\Phi_x(z;x,s)$ and $\Phi(z;x,s)$ satisfy the same jump condition.
 Thus, the coefficient $A(z;x,s)$ in the differential equations
 are meromorphic for $z$ in the complex plane with only possible isolate singularities  at $z=0$, $\pm s$ and the coefficient $B(z;x,s)$ is analytic for $z$ in the complex plane.
 Then, it follows from the local behavior of $\Phi(z;x,s)$ as $z\to\infty$, $z\to \pm s$ that
 the coefficients $A(z;x,s)$ and $B(z;x,s)$ are rational functions in $z$ with the form given in \eqref{eq: A}-\eqref{eq: B}.
 Using the fact that $\det \Phi=1$, we have $\tr A=\tr B=0$ and thus all the coefficients $A_k$, $k=0,1,2$ in  \eqref{eq: A} are trace-zero. Using the master equation in \eqref{def: Lax pair} and  the local behavior $\Phi(z)$ at $z \pm s$, we have
 \begin{equation*}
  \det A_k=0, \quad k=1,2.
 \end{equation*}
 We denote  $(A_k)_{11}=a_kb_k$ for  $k=1,2$.

Substituting the behavior of $\Phi $ at infinity into the master equation of the Lax pair \eqref{def: Lax pair}, we find after comparing the coefficients of  $z^0$ and $z^{-1}$  on both sides of the equation that
  \begin{equation}\label{eq:z-0}A_{\infty}=[\Phi_1, \sigma_3],
              \end{equation}
              and
\begin{equation}\label{eq:A-k-Psi}A_1+A_2=-n\sigma_3 +[\Phi_2+x\Phi_1, \sigma_3]+[\sigma_3,\Phi_1]\Phi_1,            \end{equation}
              where $\Phi_k$ is the coefficient of $z^{-k}$ in the large $z$ asymptotic expansion of $\Phi(z)$.
In view of  \eqref{eq:z-0}, we  get
 \begin{equation}\label{eq:A-infty-1}
 A_{\infty}= \begin{pmatrix}
                    0& -2(\Phi_1)_{12}\\
                 2(\Phi_1)_{21} &  0
                  \end{pmatrix}. \end{equation}
From the diagonal entries of the equation \eqref{eq:A-k-Psi}, we find the relation
\begin{equation}\label{eq:Phi1ab1}
2(\Phi_1)_{12}(\Phi_1)_{21} = n+(A_1+A_2)_{11}=a_1b_1+a_2b_2+n.\end{equation}
 We define
 \begin{equation}\label{def:y}
 y=-2(\Phi_1)_{12},
 \end{equation}
 then the above relations imply that
 \begin{equation}\label{eq:A-infty-21}
(A_{\infty})_{12}=y, \quad (A_{\infty})_{21}=-\frac{2}{y}(a_1b_1+a_2b_2+n).\end{equation}
We define  $(A_k)_{12}=a_ky$ for  $k=1,2$.
 Then, the other entries of $A_k$ can be expressed in terms of $a_k$ and $b_k$ for $k=1,2$, as given in \eqref{eq:A-k}.

 Similarly, the coefficient $B(z)=B(z;x,s)$ can be determined by using the behavior of $\Phi $ at infinity
\begin{equation}\label{eq:B-Phi}B(z)=\Phi_x(z)\Phi(z)^{-1}=z\sigma_3+\begin{pmatrix}
                0&  -2(\Phi_1)_{12}\\
              2(\Phi_1)_{21} & 0     \end{pmatrix}=z\sigma_3+A_{\infty}.
              \end{equation}

 The compatibility condition  $\Phi_{zx}=\Phi_{xz}$ gives  us the zero-curve equation
 \begin{equation}\label{eq:ZeroCurve}A_x-B_z+[A,B]=0.\end{equation}
 Substituting \eqref{eq: A} and \eqref{eq: B} into the above equation, the compatibility condition  is equivalent to 
\begin{equation}\label{eq:dA}
\left\{\begin{array}{l}
        \frac{dA_{\infty}}{dx}=x[A_{\infty},\sigma_3]-[A_1,\sigma_3]-[A_2,\sigma_3], \\
         \frac {dA_1}{dx}=-[A_1, A_{\infty}]+s[A_1,\sigma_3], \\
          \frac {dA_2}{dx}=-[A_2, A_{\infty}]-s[A_2,\sigma_3].
       \end{array}
\right.
\end{equation}
  We then obtain the system of differential equations \eqref{eq:CPIV}.
  Deleting $b_1$ and $b_2$ from the system, we obtain the differential equations for $a_1$ and $a_2$ as given in \eqref{eq:ak}.
  This completes the proof of Proposition \ref{pro:Laxpair}.
%

 \end{proof}

\begin{pro}\label{pro:Ham}
The Hamiltonian for the coupled Painlev\'e IV system is
\begin{equation}\label{eq: H}
H_{\texttt{IV}}(a_1,a_2,b_1,b_2; x,s)=-2(a_1b_1+a_2b_2+n)(a_1+a_2)+2(a_1b_1(x-s)+a_2b_2(x+s)+nx)-(a_1b_1^2+a_2b_2^2).
  \end{equation}
And the coupled Painlev\'e IV system  \eqref{eq:CPIV} can be written as  the Hamiltonian system:
  \begin{equation}\label{eq: H-system}
   \left\{\begin{array}{l}
\frac{d a_1}{d x}=H_{\texttt{IV}, b_1}(a_1,a_2,b_1,b_2; x,s),\\
\frac{d a_2}{d x}=H_{\texttt{IV}, b_2}(a_1,a_2,b_1,b_2; x,s),\\
\frac{d b_1}{d x}=-H_{\texttt{IV}, a_1}(a_1,a_2,b_1,b_2; x,s),\\
\frac{d b_2}{d x}=-H_{\texttt{IV}, a_2}(a_1,a_2,b_1,b_2; x,s),
\end{array}\right.
\end{equation}
where  $H_{\texttt{IV}, a}$ denotes the partial derivative of $H_{\texttt{IV}}$ with respect to $a$.
     \end{pro}

 \begin{proof}
The Hamiltonian introduced by Jimbo, Miwa and Ueno \cite{jmu} is given by
 \begin{equation}\label{def:H}
 H_{\texttt{IV}}(x;s)=-\mathtt{Res}_{z=\infty} \Phi^{(\infty)}(z)^{-1}\frac{d}{dz}\Phi^{(\infty)}(z)\frac{d}{dx}\Theta(z;x)\sigma_3=-2(\Phi_1)_{11},\end{equation}
 where $\Theta(z;x)=(\frac{1}{2}z^2+x z)\sigma_3$ and $\Phi_1$ is the coefficient of $z^{-1}$ in the large $z$ expansion of $\Phi $ in \eqref{Phi-infinity}.
Using \eqref{def: DiffId-1}, \eqref{def: DiffId-2} and \eqref{eq: Phi}, we have
\begin{align}\label{H-F}
H_{\texttt{IV}}(x;s)&=F(s_1,s_2)+2nx\\
&=2nx+\frac{1-\omega_1}{2\pi i}
    \left(\Phi^{-1}\Phi_z
    \right)_{12}(-s)
    +\frac{\omega_1-\omega_2}{2\pi i}
    \left(\Phi^{-1}\Phi_z
    \right)_{12}(s). \nonumber
\end{align}
From \eqref{Phi--s} and \eqref{Phi-s}, we get
\begin{equation}\label{eq:H-P}
H_{\texttt{IV}}(x;s)=2nx+\frac{1-\omega_1}{2\pi i}
    \left(P_1
    \right)_{12}
    +\frac{\omega_1-\omega_2}{2\pi i}
    \left(Q_1
    \right)_{12},
\end{equation}
where $P_1=P_1(x,s)$ and $Q_1=Q_1(x,s)$ are defined in \eqref{def:P-0} and \eqref{def:Q-0}, respectively.
Substituting the expansions \eqref{Phi--s} and \eqref{Phi-s} into the master equation of \eqref{def: Lax pair}, we obtain
\begin{gather}
\frac{1-\omega_1}{2\pi i}P_0\left(\begin{array}{cc} 0 & 0\\ 1 & 0\end{array}\right)P_0^{-1}=A_1
,\label{P0}\\
 P_1+\frac{1-\omega_1}{2\pi i}\left[P_1,\left(\begin{array}{cc} 0 & 0\\ 1 & 0\end{array}\right)\right]=P_0^{-1}((x-s)\sigma_3+A_{\infty}-\frac{A_2}{2s})P_0, \label{P1}\\
 \frac{\omega_1-\omega_2}{2\pi i}Q_0\left(\begin{array}{cc} 0 & 0\\ 1 & 0\end{array}\right)Q_0^{-1}=A_2
,\label{Q0}\\
 Q_1+\frac{\omega_1-\omega_2}{2\pi i}\left[Q_1,\left(\begin{array}{cc} 0 & 0\\ 1 & 0\end{array}\right)\right]=Q_0^{-1}(
 (x+s)\sigma_3+A_{\infty}+\frac{A_1}{2s})Q_0. \label{Q1}
\end{gather}
Now
$$P_0\left(\begin{array}{cc} 0 & 0\\ 1 & 0\end{array}\right)P_0^{-1}= \left(
                               \begin{array}{cc}
                                  (P_0)_{12}(P_0)_{22}& -(P_0)_{12}^2 \\
                                 (P_0)_{22}^2 & - (P_0)_{12}(P_0)_{22} \\
                                 \end{array}
                             \right).$$
Then,  a substitution of the above equation into \eqref{P0} gives
\begin{equation}\label{eq:pij}
\frac{1-\omega_1}{2\pi i}(P_0)_{22}^2=-\frac{a_1b_1^2}{y}, \quad \frac{1-\omega_1}{2\pi i}(P_0)_{12}^2=-a_1y, \quad
\frac{1-\omega_1}{2\pi i}(P_0)_{12}(P_0)_{22}=a_1b_1.
\end{equation}
Let
$$\mathcal{A}=(x-s)\sigma_3+A_{\infty}-\frac{A_2}{2s},$$
we obtain from \eqref{P1} that
\begin{align}\label{eq:H-P-1}
\frac{1-\omega_1}{2\pi i}(P_1)_{12}&=\frac{1-\omega_1}{2\pi i}(2(P_0)_{12}(P_0)_{22}\mathcal{A}_{11}-(P_0)_{12}^2\mathcal{A}_{21}+(P_0)_{22}^2\mathcal{A}_{12})\nonumber\\
&=2a_1b_1\mathcal{A}_{11}+a_1y \mathcal{A}_{21}-\frac{a_1b_1^2}{y}\mathcal{A}_{12}\nonumber\\
    &=2a_1b_1(x-s)-2a_1(a_1b_1+a_2b_2+n)-a_1b_1^2+\frac{1}{2s}a_1a_2(b_1-b_2)^2.
\end{align}
Similarly, we get after some straightforward calculations
\begin{equation}\label{eq:qij}
\frac{\omega_1-\omega_2}{2\pi i}(Q_0)_{22}^2=-\frac{a_2b_2^2}{y}, \quad \frac{\omega_1-\omega_2}{2\pi i}(Q_0)_{12}^2=-a_2y, \quad
\frac{\omega_1-\omega_2}{2\pi i}(Q_0)_{12}(Q_0)_{22}=a_2b_2,
\end{equation}
and
\begin{equation}\label{eq:H-P-2}
\frac{\omega_1-\omega_2}{2\pi i}(Q_1)_{12}=2a_2b_2(x+s)-2a_1(a_1b_1+a_2b_2+n)-a_2b_2^2-\frac{1}{2s}a_1a_2(b_1-b_2)^2.
\end{equation}
Then, the expression of the  Hamiltonian  \eqref{eq: H} follows directly by substituting  \eqref{eq:H-P-1}  and \eqref{eq:H-P-2} into  \eqref{eq:H-P}.   In view of  the Hamiltonian  \eqref{eq: H},   it is seen that the coupled Painlev\'e IV system  \eqref{eq:CPIV} is equivalent to the Hamiltonian system \eqref{eq: H-system}. This completes the proof of Proposition \ref{pro:Ham}.
\end{proof}

\subsection{Proof of Theorem \ref{thm:HankelFixedn}}
The relation \eqref{thm: F-H} follows  from \eqref{H-F}. 
Let $Y_1$ and  $Y_2$ be the   coefficients of $1/z$ and $1/z^2$ in the expansion of $Y$ near infinity  \eqref{Y-infinity},
we have the following relations for the recurrence coefficients  $\alpha_n=\alpha_n(s_1,s_2)$,  $\beta_{n-1}=\beta_{n-1}(s_1,s_2)$ and  the leading coefficient  $\gamma_{n}=\gamma_{n}(s_1,s_2)$ of the monic orthogonal polynomial of degree $n-1$:
\begin{equation}\label{recurrence coefficients and Y}
\alpha_n=(Y_1)_{11}+\frac {(Y_2)_{12}}{(Y_1)_{12}},\quad\quad
\beta^2_n=(Y_1)_{12}(Y_1)_{21} \quad\mbox{and}\quad \gamma_{n-1}^2=-\frac{1}{2\pi i}(Y_1)_{21};
\end{equation}
see \cite{DeiftZhouS}.
In view of \eqref{eq: Phi}, it is then  seen that
\begin{equation}\label{eq: Y1Phi1}
(Y_1)_{11}=-(\Phi_1)_{11}-nx, \quad \quad  (Y_1)_{12}=e^{-x^2}(\Phi_1)_{21}, \quad \quad  (Y_1)_{21}=e^{x^2}(\Phi_1)_{12},
\end{equation}
and
\begin{equation}\label{eq: Y2Phi2}
(Y_2)_{12}=e^{-x^2}((n+1)x(\Phi_1)_{21}+(\Phi_2)_{21} ),\end{equation}
where $\Phi_1$ and $\Phi_2$ are defined in \eqref{Phi-infinity}.
From the relation \eqref{eq:A-k-Psi}, we have
\begin{equation}\label{eq:Phi2-A}
x(\Phi_1)_{21}+(\Phi_2)_{21}=\frac {1}{2}(A_1+A_2)_{21}+(\Phi_1)_{11}(\Phi_1)_{21}.
\end{equation}
Substituting \eqref{eq: Y1Phi1}  into \eqref{recurrence coefficients and Y} and recalling \eqref{eq:Phi1ab1}-\eqref{eq:A-infty-21}, we  obtain that
\begin{equation}\label{eq: beta-ab}
 \beta^2_n=(\Phi_1)_{12}(\Phi_1)_{21} =\frac {1}{2}\left(a_1(x;s)b_1(x;s)+a_2(x;s)b_2(x;s)+n\right),
\end{equation}
and
\begin{equation}\label{eq:leading-ab}
\gamma_{n-1}^2=-\frac{1}{2\pi i}e^{x^2}(\Phi_1)_{12}=\frac{1}{4\pi i}e^{x^2}y(x;s)\neq 0.
\end{equation}
On account of \eqref{eq:CPIV}, we have
\begin{equation}\label{eq:log-d-gamma}
\frac{d }{dx}\ln \gamma_{n-1}=a_1(x;s)+a_2(x;s).
\end{equation}
Inserting \eqref{eq: Y1Phi1}-\eqref{eq:Phi2-A} into \eqref{recurrence coefficients and Y} yields
\begin{align}\label{eq:alpha-ab}
\alpha_n&=-(\Phi_1)_{11}+\frac {(\Phi_2)_{21}+x(\Phi_1)_{21}}{(\Phi_1)_{21}}\nonumber\\
&=\frac {1}{2}\frac{(A_1+A_2)_{21}}{(\Phi_1)_{21}}\nonumber\\
&=\frac{a_1(x;s)b_1^2(x;s)+a_2(x;s)b_2^2(x;s)}{2(a_1(x;s)b_1(x;s)+a_2(x;s)b_2(x;s)+n)}.
\end{align}
In summary,  we obtain \eqref{thm:alpha}-\eqref{thm: gamma} by collecting   \eqref{eq: beta-ab}-\eqref{eq:alpha-ab}.

From \eqref{Y}, \eqref{eq: Phi} and \eqref{eq:pij}, it is seen that
\begin{equation}\label{eq:pns1}
\pi_n(s_1)=(Y)_{11}(s_1)=e^{\frac{1}{2}s^2-sx}(\Phi)_{22}(-s)=e^{\frac{1}{2}s^2-sx}(P_0)_{22},
\end{equation}
and
\begin{equation}\label{eq:pns2}
\pi_n(s_2)=(Y)_{11}(s_2)=e^{\frac{1}{2}s^2+sx}(\Phi)_{22}(s)=e^{\frac{1}{2}s^2+sx}(Q_0)_{22}.
\end{equation}
Therefore, we obtain \eqref{thm:pns1}  and \eqref{thm:pns2} after replacing  the expressions of $(P_0)_{22}$  and $(Q_0)_{22}$ by \eqref{eq:pij} and \eqref{eq:qij}.  This completes the proof of Theorem \ref{thm:HankelFixedn}.

\section{Nonlinear steepest descent analysis of the Riemann-Hilbert  problem for $Y$}\label{OPAsy}
In this section, we take $s_1=\sqrt{2n}+\frac {t_1}{\sqrt{2}n^{1/6}}$  and $s_2=\sqrt{2n}+\frac {t_2}{\sqrt{2}n^{1/6}}$ in the weight function \eqref{weight}. Then, we perform Deift-Zhou nonlinear steepest descent analysis \cite{DeiftZhouU,DeiftZhouS,DeiftZhouA} for the Riemann-Hilbert  problem for $Y(z;s_1,s_2)$ as $n\to \infty$.
The analysis will allow us to find the asymptotics of Hankel determinants and the orthogonal polynomials associated with \eqref{weight}.
The analysis of a  Riemann-Hilbert  problem with one jump singularity in the weight function \eqref{weight} is considered in \cite{XuZhao} by the second author and Zhao.
\subsection{The  first transformation: $Y\rightarrow T$}

The first transformation is defined by
 \begin{equation}\label{Y to T}T(z)= (2n)^{-\frac 12n\sigma_3}e^{-\frac 1 2 nl \sigma_3} Y(\sqrt{2n}z) e^ {n \left (\frac 1 2 l
-g(z)\right )\sigma_3},\quad z\in
\mathbb{C}\backslash \mathbb{R}, \end{equation}
where the constant $l=-1-2\ln 2$. The  $g$-function therein is defined by
\begin{equation}\label{g}g(z)=\frac{2}{\pi}\int_{-1}^{1}  \ln(z-x) \sqrt{1-x^2} dx,  \end{equation}
where the logarithm takes the principle branch $\arg(z-x)\in (-\pi, \pi)$.
We then introduce the $\phi$-function
\begin{equation}\label{phi}
\phi(z)=z\sqrt{z^2-1}-\ln\left(z+\sqrt{z^2-1}\right), \end{equation}
where the principle branches are chosen.
The $\phi$-function and $g$-function are related by
\begin{equation}\label{phase condition}
2\left[g(z)+\phi(z)\right]-2z^2-l=0, \quad z\in \mathbb{C}\backslash(-\infty,1].
\end{equation}
As a consequence,   $T$ is normalized at infinity
$$T(z)=I+O(1/z),$$
and satisfies the jump
condition
\begin{equation}\label{Jump-T}T_+(x)=T_-(x)\left\{
\begin{array}{ll}
 \left(
                               \begin{array}{cc}
                                 1 & \theta(x)e^{-2n \phi(x)} \\
                                 0 & 1 \\
                               \end{array}
                             \right),&   x\in(1,+\infty);\\
                             &\\
  \left(
                               \begin{array}{cc}
  e^{2n\phi_+(x)} & \theta(x) \\                                  0 & e^{2n\phi_-(x)} \\
                               \end{array}
                             \right),&   x\in(-1, 1); \\
                             &\\
   \left(
                               \begin{array}{cc}
                                 1 &  e^{-2n \phi_+(x)} \\
                                 0 & 1 \\
                               \end{array}
                             \right), &  x\in(-\infty,-1),
\end{array}
\right .
\end{equation}
where $\theta(x)=\left\{\begin{array}{cc}
                                                          1& x< \lambda_1\\
                                                           \omega_1& \lambda_1<x<\lambda_2,\\
                                                        \omega_2& x>\lambda_2,
                                                       \end{array}
                                                       \right.$
with $\lambda_1=1+\frac {t_1}{2n^{2/3}}$ and $\lambda_2=1+\frac {t_2}{2n^{2/3}}$.

\subsection{The  second transformation: $T\rightarrow S$}

In the second  transformation, we define

 \begin{equation}\label{T-S}
S(z)=\left \{
\begin{array}{ll}
  T(z), & \mbox{for $z$ outside the lens,}
  \\ &\\
  T(z) \left( \begin{array}{cc}
                                 1 & 0 \\
                                   -e^{2n\phi(z)} & 1 \\
                               \end{array}
                             \right) , & \mbox{for $z$ in the upper lens,}\\ &\\
T(z) \left( \begin{array}{cc}
                                 1 & 0 \\
                                   e^{2n\phi(z)} & 1 \\
                               \end{array}
                             \right) , & \mbox{for $z$ in the lower lens,}
\end{array}\right .\end{equation}
where the regions are illustrated in Fig.\ref{fig:S}.
Then $S$ satisfies the jump condition
\begin{equation}\label{def:SJump}
S_+(z)=S_-(z)J_S(z).
\end{equation}

\begin{figure}[h]
\begin{center}
  \includegraphics[scale=0.9,bb=140 617 457 668]{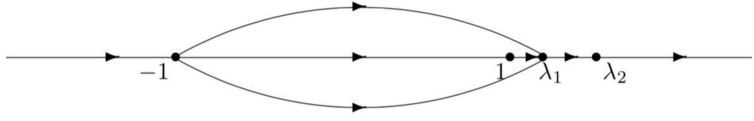}
 \caption{The jump contours and regions for the RH problem for $S$ when $\lambda_1>1$.}
\label{fig:S}
\end{center}
\end{figure}

%
%
For $\lambda_1>1$,  we have
\begin{equation} \label{def:SJump-1}
J_S(z)=\left\{
\begin{array}{ll}
\left(
                               \begin{array}{cc}
                                 1 & \omega_2e^{-2n\phi(z)} \\
                                 0 & 1 \\
                               \end{array}
                             \right),&   z\in(\lambda_2,+\infty),\\[.4cm]
\left(
                               \begin{array}{cc}
                                 1 & \omega_1e^{-2n\phi(z)} \\
                                 0 & 1 \\
                               \end{array}
                             \right),&   z\in(\lambda_1,\lambda_2),\\[.4cm]
  \left(
                               \begin{array}{cc}
                                 0 & e^{-2n\phi(z)}\\
                                 -e^{2n\phi(z)} & 0 \\
                               \end{array}
                             \right),&   z\in(1,\lambda_1),\\[.4cm]
   \left(
                               \begin{array}{cc}
                                 0 & 1\\
                                 - 1 & 0 \\
                               \end{array}
                             \right),&   z\in(-1, 1)\\
                             \left( \begin{array}{cc}
                                 1 & 0 \\
                                   e^{2n\phi(z)} & 1 \\
                               \end{array}
                             \right),&   z~\mbox{on lens}, \\[.4cm]

   \left(
                               \begin{array}{cc}
                                 1 &  e^{-2n \phi_+(z)} \\
                                 0 & 1 \\
                               \end{array}
                             \right), &  z\in(-\infty,-1),
\end{array}\right.
\end{equation}
where the contours are indicated in Fig. 1.\\
For $\lambda_1<1<\lambda_2$, we have
\begin{equation*}J_S(z)=\left\{
\begin{array}{ll}
 \left(
                               \begin{array}{cc}
                                 1 & \omega_2e^{-2n\phi(z)} \\
                                 0 & 1 \\
                               \end{array}
                             \right),&   z\in(\lambda_2,+\infty),\\[.4cm]
\left(
  \begin{array}{cc}
                                 1 & \omega_1e^{-2n\phi(z)} \\
                                 0 & 1 \\
                               \end{array}
                             \right),&   z\in(1,\lambda_2),\\[.4cm]
 \left(
 \begin{array}{cc}
                                e^{2n\phi_+(z)} & \omega_1 \\
                                 0 & e^{2n\phi_-(z)}\\
                               \end{array}
                             \right),&   z\in(\lambda_1,1),\\[.4cm]
                               \left(
                               \begin{array}{cc}
                                 0 & 1 \\
                                 - 1 & 0 \\
                               \end{array}
                             \right),&   z\in(-1, \lambda_1 ), \\[.4cm]

  \left( \begin{array}{cc}
                                 1 & 0 \\
                                   e^{2n\phi(z)} & 1 \\
                               \end{array}
                             \right),&   z~\mbox{on lens}, \\[.4cm]

   \left(
                               \begin{array}{cc}
                                 1 &  e^{-2n \phi_+(z)} \\
                                 0 & 1 \\
                               \end{array}
                             \right), &  z\in(-\infty,-1).
\end{array}
\right.
\end{equation*}

For $\lambda_1<\lambda_2<1$, we have
\begin{equation*}J_S(z)=\left\{
\begin{array}{ll}
 \left(
                               \begin{array}{cc}
                                 1 & \omega_2e^{-2n\phi(z)} \\
                                 0 & 1 \\
                               \end{array}
                             \right),&   z\in(1,+\infty),\\[.4cm]
\left(
 \begin{array}{cc}
                                e^{2n\phi_+(z)} & \omega_2 \\
                                 0 & e^{2n\phi_-(z)}\\
                               \end{array}
                             \right),&   z\in(\lambda_2,1),\\[.4cm]
 \left(
 \begin{array}{cc}
                                e^{2n\phi_+(z)} & \omega_1 \\
                                 0 & e^{2n\phi_-(z)}\\
                               \end{array}
                             \right),&   z\in(\lambda_1,\lambda_2),\\[.4cm]
  \left(
                               \begin{array}{cc}
                                 0 & 1 \\
                                 - 1 & 0 \\
                               \end{array}
                             \right),&   z\in(-1, \lambda_1 ), \\[.4cm]

  \left( \begin{array}{cc}
                                 1 & 0 \\
                                   e^{2n\phi(z)} & 1 \\
                               \end{array}
                             \right),&   z~\mbox{on lens}, \\[.4cm]

   \left(
                               \begin{array}{cc}
                                 1 &  e^{-2n \phi_+(z)} \\
                                 0 & 1 \\
                               \end{array}
                             \right), &  z\in(-\infty,-1).
\end{array}
\right.
\end{equation*}
\subsection{Global Parametrix}
The global parametrix solves the following approximating RH problem, with  the jump along $(-1, \lambda_1)$:
\begin{description}
\item(a)~~  $N(z)$ is analytic in  $\mathbb{C}\backslash
[-1,\lambda_1]$;
\item(b)~~   \begin{equation}\label{Jump-N} N_{+}(x)=N_{-}(x)\left(
       \begin{array}{cc}
       0 & 1 \\
       -1 & 0 \\
       \end{array}
       \right),\quad x\in  (-1,\lambda_1);\end{equation}
\item(c)~~    \begin{equation}\label{NinfiniytOT} N(z)= I+O(z^{-1}) ,\quad z\rightarrow\infty .\end{equation}
  \end{description}

The solution of the RH problem is constructed explicitly ( see \cite{WuXuZhao} ):
\begin{equation}\label{N-expression}
 N(z) =
\left(
  \begin{array}{cc}
    \frac {\eta(z) + \eta^{-1}(z)} {2}&\frac {\eta(z) -\eta^{-1}(z)} {2i} \\
    -\frac {\eta(z) - \eta^{-1}(z)} {2i} &\frac {\eta(z) + \eta^{-1}(z)} {2} \\
  \end{array}
\right), \quad
 \eta(z)=\left ( \frac {z-\lambda_1}{z+1} \right )^{1/ 4},
  \end{equation}
 where the branch is chosen such that   $\eta(z)$ is analytic in $\mathbb{C}\setminus [-1, \lambda_1]$, and $\eta(z)\sim 1$ as $z\to\infty$.

\subsection{Local parametrix near $z=1$}

The jump matrices for $S(z)$ are not close to the identity matrix near the node points $z=\pm 1$. Thus,  local parametrices have to be constructed in the neighborhoods of $z=\pm 1$. Near  $z = -1$, the parametrix $P^{(-1)}(z)$ can be constructed in terms of the Airy function \cite{d,DeiftZhouS}.  We proceed to find a local parametrix $P^{(1)}(z)$  in $U(1,r)$,  which is an open disc centered at   $z=1$ with radius $r>0$. The parametrix solves  the following RH problem:

\subsection*{Riemann-Hilbert problem for $P^{(1)}$}
\begin{description}
  \item(a)~~ $P^{(1)}(z)$  is analytic in $U(1,r) \backslash  \Sigma_{S}$;    \item(b)~~ On $\Sigma_{S}\cap U(1,r)$, $P^{(1)}(z)$ satisfies the same jump condition as $S(z)$,
  \begin{equation}\label{PJS}
P^{(1)}_+(z)=P^{(1)}_-(z)J_{S}, ~~z\in\Sigma_S \cap U(1,r);\end{equation}
  \item(c)~~  $P^{(1)}(z)$ satisfies the following matching condition on $\partial U(1,r)$:
\begin{equation}\label{mathcing condition}
P^{(1)}(z)N^{-1}(z)=I+ O\left (n^{-1/3}\right );
 \end{equation}
 \item(d)~~  The behavior of  $P^{(1)}(z)=O(\ln(z-\lambda_k) $ as $z\to \lambda_k$ for $k=1,2$.
 \end{description}

To construct  the local  parametrix, we introduce the following model RH  problem, which shares the same jump condition as
$P^{(1)}(z)e^{-n\phi(z)\sigma_3}$.

\subsection*{The  Riemann-Hilbert problem for $\Psi$}
 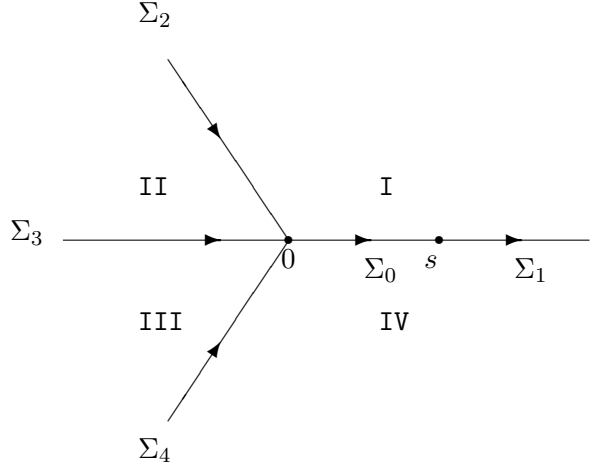
\begin{figure}[t]
\begin{center}
   \setlength{\unitlength}{1truemm}
   \begin{picture}(80,70)(-5,2)
       \put(40,40){\line(-2,-3){16}}
       \put(40,40){\line(-2,3){16}}
       \put(40,40){\line(-1,0){30}}
       \put(40,40){\line(1,0){40}}

       \put(30,55){\thicklines\vector(2,-3){1}}
       \put(30,40){\thicklines\vector(1,0){1}}
       \put(50,40){\thicklines\vector(1,0){1}}
       \put(70,40){\thicklines\vector(1,0){1}}
       \put(30,25){\thicklines\vector(2,3){1}}

       \put(39,36.3){$0$}
        \put(58,36.3){$s$}
       \put(20,11){$\Sigma_4$}
       \put(20,69){$\Sigma_2$}
       \put(3,40){$\Sigma_3$}
        \put(70,35){$\Sigma_1$}
         \put(50,35){$\Sigma_0$}

       \put(52,46){$\texttt{I}$}
       \put(52,28){$\texttt{IV}$}
       \put(20,46){$\texttt{II}$}
       \put(20,28){$\texttt{III}$}

       \put(40,40){\thicklines\circle*{1}}
       \put(60,40){\thicklines\circle*{1}}
  \end{picture}
   \caption{The jump contours and regions for the RH problem for $\Psi$ for $s>0$.}
   \label{fig:Psi}
\end{center}
\end{figure}
\begin{description}
  \item(a)~~  $\Psi(\zeta;x,s)$ ($\Psi(\zeta)$, for short) is analytic in
  $\mathbb{C}\backslash \bigcup_{j=0}^4\Sigma_j$,
  where  the jump contours are indicated in Fig. \ref{fig:Psi};

  \item(b)~~  $\Psi(\zeta)$  satisfies the jump condition for $s>0$
  \begin{gather}\label{Psi-jump}
  \Psi_+ (\zeta)=\Psi_- (\zeta)
  \left\{ \begin{array}{ll}
            \left(
                               \begin{array}{cc}
                                 1 & \omega_2 \\
                                 0 & 1 \\
                                 \end{array}
                             \right), & \zeta \in (s,+\infty), \\[.4cm]
            \left(
                               \begin{array}{cc}
                                 1 & \omega_1 \\
                                 0 & 1 \\
                                 \end{array}
                             \right), & \zeta \in (0,s), \\[.4cm]
           \left(
                               \begin{array}{cc}
                                 1 & 0 \\
                                 1 & 1 \\
                                 \end{array}
                             \right), & \zeta \in {\Sigma}_2,\\[.4cm]
           \left(
                               \begin{array}{cc}
                                 0 & 1 \\
                                 -1 & 0 \\
                                 \end{array}
                             \right),&
                                           \zeta \in {\Sigma}_3,\\  [.4cm]
            \left(
                               \begin{array}{cc}
                                 1 & 0 \\
                                 1 & 1 \\
                                 \end{array}
                             \right),  & \zeta \in \Sigma_4;
          \end{array}
    \right .
  \end{gather}

\item(c)~~     As $\zeta\rightarrow \infty$,
\begin{equation}\label{psi-infinity}
\Psi(\zeta)= \left( \begin{array}{cc} 1 & 0\\ ir(x,s)   & 1 \end{array}\right)
      \left [I+\frac {\Psi_{1}(x,s)}{\zeta}
     +O\left (  \zeta^{-2}\right )\right]\zeta^{-\frac{1}{4}\sigma_3} \frac{I+i\sigma_1}{\sqrt{2}} e^{-(\frac{2}{3}\zeta^{3/2}+x\zeta^{1/2}) \sigma_3} ,
\end{equation}
  where $r(x,s)=i (\Psi_1(x,s))_{12}$.
\item(d)~~ As $\zeta\rightarrow 0$,
 \begin{equation}\label{Psi0}
\Psi(\zeta)= \Psi^{(0)}(\zeta)\left (I+ \frac{1-\omega_1}{2\pi i}\left(\begin{array}{cc} 0 & 1\\ 0 & 0\end{array}\right)\ln \zeta \right ) E,
\end{equation}
where    $\Psi^{(0)}(\zeta)$ is analytic at $\zeta=0$ with the expansion
 \begin{equation}\label{def: Phat}\Psi^{(0)}(\zeta)=\hat{P_0}(x,s)(I+\hat{P}_1(x,s)\zeta+O(\zeta^2)) .\end{equation}
And the piecewise constant matrix
\begin{equation*}
{E}=\left\{\begin{array}{ll}
 \left(\begin{array}{cc} 1 & 0\\ 0 & 1\end{array}\right), & \zeta\in \Omega_1,\\
 \left(\begin{array}{cc} 1 & 0\\ -1 & 1\end{array}\right),  & \zeta\in \Omega_2,\\
  \left(\begin{array}{cc} 1-\omega_1 & -\omega_1\\ 1 & 1\end{array}\right),  & \zeta\in \Omega_3,\\
  \left(\begin{array}{cc} 1 & -\omega_1\\ 0 & 1\end{array}\right),  & \zeta\in \Omega_4.
 \end{array}\right.
 \end{equation*}
\item(e)~~ As $\zeta\rightarrow s$,
 \begin{equation}\label{Psis}
\Psi(\zeta)= \Psi^{(1)}(\zeta)\left (I+ \frac{\omega_1-\omega_2}{2\pi i}\left(\begin{array}{cc} 0 & 1\\ 0 & 0\end{array}\right)\ln (\zeta-s)\right ) \widehat{E},
\end{equation}
where  $\Psi^{(1)}(\zeta)$  is analytic at $\zeta=s$ with the following expansion
 \begin{equation}\label{def: Qhat}\Psi^{(1)}(\zeta)=\hat{Q}_0(x,s)(I+\hat{Q}_1(x,s)(\zeta-s)+O((\zeta-s)^2)).\end{equation}
Here, the piecewise constant matrix
 \begin{equation*} \widehat{E}=\left\{\begin{array}{ll}
 \left(\begin{array}{cc} 1 & 0\\ 0 & 1\end{array}\right), & \Im \zeta>0,\\
 \left(\begin{array}{cc} 1 & -\omega_2\\ 0 & 1\end{array}\right),  & \Im \zeta<0.
 \end{array}\right.
 \end{equation*}
\end{description}

The RH problem for $\Psi$ appears recently  in the studies of the Fredholm determinants of Painlev\'e II  kernel and
 Painlev\'e XXXIV kernel in \cite{XuDai2019} by the second author of the present work and Dai. It also arises in the studies of   the determinants of the  Airy kernel with several discontinuities in \cite{cd} by Claeys and Doeraene, when the number of discontinuities therein equals to two.  The existence of solution to the RH problem for $\Psi$  is  proved.  It is also shown  that $\Psi(\zeta;x,s)$ satisfies the following Lax pair
\begin{gather}
\Psi_{\zeta}(\zeta;x,s)=\left(\begin{array}{cc} \frac{v_{1x}}{2\zeta}+\frac{v_{2x}}{2(\zeta-s)} & i-\frac{iv_{1}}{\zeta}-\frac{iv_{2}}{\zeta-s}\\
-i\left(\zeta+x+v_1+v_2+\frac{v_{1x}^2}{4v_1\zeta}+\frac{v_{2x}^2}{4v_2(\zeta-s)}\right) & -\frac{v_{1x}}{2\zeta}-\frac{v_{2x}}{2(\zeta-s)}\end{array}\right) \Psi(\zeta;x,s),\label{Lax pair-z}\\
\Psi_{x}(\zeta;x,s)=\left(\begin{array}{cc} 0 & i\\ -i\zeta-2i(v_1+v_2+\tfrac{x}{2}) & 0\end{array}\right)\Psi(\zeta;x,s)\label{eq: PsiLax pair}.
  \end{gather}
The compatibility condition of the Lax pair is described by the coupled Painlev\'e II system \eqref{eq:CPII}.
Moreover, the Hamiltonian \eqref{def:Hamiltonian-CPII}
is related to the coefficient of $1/\zeta$ in the large-$\zeta$ expansion of $\Psi(\zeta)$ in \eqref{psi-infinity}
by
 \begin{equation}\label{def:r}
H_{\texttt{II}}(x;s)=\frac{x^2}{4}+ r(x,s);
 \end{equation}
 see \cite[Equation (4.21)]{XuDai2019}.

We introduce the conformal mapping
\begin{equation}\label{f}
f(z)=\left (\frac{3}{2}\phi(z)\right )^{2/3}=2(z-1)+\frac{1}{5}(z-1)^2+O\left ((z-1)^3\right ),
\end{equation} from a neighborhood of $z=1$ to that of the origin.
Then the local parametrix  ${P}^{(1)}(z)$ can be constructed  for   $z\in  U(1, r)$ as follows
\begin{equation}\label{parametrix}
P^{(1)}(z)=E(z)\Psi\left (n^{2/3}(f(z)-f(\lambda_1));n^{2/3}f(\lambda_1),n^{2/3}(f(\lambda_2)-f(\lambda_1))\right )e^{n\phi(z)\sigma_3},\end{equation}
and the pre-factor
\begin{equation}\label{E}
E(z)=N(z)\frac{1}{\sqrt{2}}(I-i\sigma_1)
\left [n^{2/3}(f(z)-f(\lambda_1))\right ]^{\sigma_3/4}\left( \begin{array}{cc} 1 & 0\\ -iH_{\texttt{II}}(n^{2/3}f(\lambda_1); n^{2/3}(f(\lambda_2)-f(\lambda_1)))   & 1 \end{array}\right),\end{equation}
where $H_{\texttt{II}}(x;t)$ is the Hamiltonian given in \eqref{def:Hamiltonian-CPII} and  related to the coefficient  of $1/\zeta$ in the large-$\zeta$ expansion of $\Psi(\zeta)$ by \eqref{def:r}.

\begin{pro} \label{pro: parametrix} For
$$\lambda_k=\frac{s_k}{\sqrt{2n}}=1+\frac {t_k}{2n^{2/3}}$$
 with bounded real parameters $t_k$, $k=1,2$,
 the local parametrix defined in \eqref{parametrix}  and \eqref{E} solves the RH problem for $P^{(1)}$. Moreover, we have the
 expansion   for  $z\in \partial U(1, r)$:
 \begin{equation}\label{R-jump}
P^{(1)}(z) N(z)^{-1}=I+\frac{\Delta(z)}{n^{1/3}}+O(n^{-2/3}),
\end{equation}
where
\begin{equation}\label{def:Delta}
\Delta(z)=\frac{H_{\texttt{II}}(n^{2/3}f(\lambda_1); n^{2/3}(f(\lambda_2)-f(\lambda_1)))}{ 2(f(z)-f(\lambda_1))^{1/2}}N(z)(\sigma_3-i\sigma_1)N^{-1}(z)=O(1),
\end{equation}
and $H_{\texttt{II}}(x,t)$ is the Hamiltonian defined in \eqref{def:Hamiltonian-CPII}.
\end{pro}
\begin{proof}
Taking the principle branch for the fractional power,  it follows from  \eqref{f} that
$$(f(x)-f(\lambda_1))^{\frac{1}{4}}_+=(f(x)-f(\lambda_1))^{\frac{1}{4}}_{-}~e^{\frac{\pi}{2}i}, \quad x<\lambda_1.$$
This, together with the expression of $N(z)$ in \eqref{N-expression},
 implies that $E(z)$ is analytic  for z  in $U(1, r)$.  Recalling the properties of $\Psi(\zeta)$ in \eqref{Psi-jump}-\eqref{Psis}, it is  then seen that the jump condition and the local behaviors near $\lambda_k, k=1,2$,   in the Riemann-Hilbert problem for $P^{(1)}$,  are fulfilled.

     We then proceed to check the matching condition \eqref{mathcing condition}.  Substituting the large-$\zeta$ behavior of $\Psi(\zeta)$ \eqref{psi-infinity} into  \eqref{parametrix}  leads us to  the expansion for $z$ on $\partial U(1, r)$ as $n\to \infty$:
   \begin{align}\label{eq:P1Exp}
P^{(1)}(z)N(z)^{-1} &=N(z)\frac{1}{\sqrt{2}}(I-i\sigma_1)
\left [n^{2/3}(f(z)-f(\lambda_1))\right ]^{\sigma_3/4}\left( \begin{array}{cc} 1 & 0\\ -\frac{i}{4}(n^{2/3}f(\lambda_1))^2  & 1 \end{array}\right)\nonumber\\
&~~\left(I+\frac{\Psi_1(n^{2/3}f(\lambda_1),n^{2/3}(f(\lambda_2)-f(\lambda_1)))}{n^{2/3}(f(z)-f(\lambda_1))}+O(n^{-4/3})\right)\nonumber\\
&~~\left [n^{2/3}(f(z)-f(\lambda_1))\right ]^{-\sigma_3/4}\frac{1}{\sqrt{2}}(I+i\sigma_1)e^{n\rho(z;\lambda_1)\sigma_3}N(z)^{-1}
,
\end{align}
    where
\begin{align}\label{def:rho}
\rho(z;\lambda_1)&=\frac{2}{3}f(z)^{3/2}-\frac{2}{3}(f(z)-f(\lambda_1))^{3/2}-f(\lambda_1)(f(z)-f(\lambda_1))^{1/2}\nonumber\\
&=\frac{1}{\left(f(z)-f(\lambda_1)\right)^{1/2}}\left(\frac{2}{3}f(z)^{2}\left(1-\frac{f(\lambda_1)}{f(z)}\right)^{1/2}-\frac{2}{3}f(z)^{2}\left(1-\frac{f(\lambda_1)}{f(z)}\right)^{2}\right. \nonumber\\
& ~~~~~~~~~~~~~~~~~~~~~~~~~~~~~~~~~\left.-f(z)f(\lambda_1)\left(1-\frac{f(\lambda_1)}{f(z)}\right)\right)\nonumber\\
&=\frac{f(\lambda_1)^2}{4\left(f(z)-f(\lambda_1)\right)^{1/2}}(1+O(f(\lambda_1)).
\end{align}
 Here $f(\lambda_1)\sim 2( \lambda_1-1)$ as $\lambda_1\to 1$.
Inserting the definition of $\Psi_1$ in \eqref{psi-infinity} and \eqref{def:rho} into \eqref{eq:P1Exp}, we obtain \eqref{R-jump}.
 For $z\in \partial U(1, r)$, the denominator  in \eqref{def:Delta}, namely $f(z)-f(\lambda_1)$,  is bounded away from zero.  It follows from Proposition \ref{pro:H} and \eqref{eq: DHaml}, $H(x;s)$ is analytic for real variables $x$ and $s$ and thus bounded. Therefore,  the factor $\Delta$ defined in \eqref{def:Delta} is bounded for  $z\in \partial U(1, r)$. Thus, we obtain the matching condition \eqref{mathcing condition} and complete the proof of Proposition \ref{pro: parametrix}.
\end{proof}

\begin{rem} \label{rem:parameters}
The estimate    in \eqref{def:Delta} and thus the
 matching condition  \eqref{mathcing condition} can be established  for more general parameters:
 $$\lambda_k=\frac{s_k}{\sqrt{2n}}=1+\frac {t_k}{2n^{2/3}}$$
 where
 \begin{equation}\label{def:t}-c_1\leqslant t_1< t_2\leqslant c_2 n^{1/6}, \qquad  t_2-t_1\leqslant c_3, \end{equation}
 for any given positive constants $c_k$ , $k=1,2,3$.
Actually, for such parameters, we have
$$n^{2/3}(f(\lambda_2)-f(\lambda_1))=t_2-t_1+O(n^{-1/3}).$$  In view of the asymptotic behavior \eqref{eq:u-asy} and the relation \eqref{eq: DHaml},    we know that $H_{\texttt{II}}(x;s)$ is exponentially small  for bounded $s$ and large positive $x$.
 Therefore, we have the estimate   \eqref{def:Delta}  for  $ d \leqslant t_1\leqslant c_2 n^{1/6}$ with a certain big enough constant $d$.
This, together with  the estimate  \eqref{def:Delta}  derived  before for  $ -c_1\leqslant t_1\leqslant d$, leads us to the claim.
\end{rem}
\subsection{The final transformation: $S\rightarrow R$}
The final transformation is defined by
\begin{equation}\label{S-R}
R(z)=\left\{ \begin{array}{ll}
                S(z)N^{-1}(z), & z\in \mathbb{C}\backslash \left \{ U(-1,r)\cup U(1,r)\cup \Sigma_S \right \},\\
               S(z) \left\{P^{(-1)}(z)\right\}^{-1}, & z\in   U(-1,r)\backslash \Sigma_{S},  \\
               S(z)  \left\{P^{(1)}(z)\right\}^{-1}, & z\in   U(1,r)\backslash
               \Sigma_{S} .
             \end{array}\right .
\end{equation}
From the matching condition \eqref{mathcing condition}, we have
\begin{equation}
\|J_R(z)-I\|_{L^2\cap L^{\infty}(\Sigma_R)}=O(n^{-1/3}),
\end{equation}
where the error bound is uniform for the parameters $t_1$ and $t_2$ specified by \eqref{def:t}.
Thus, by a standard argument as given in \cite{d, DeiftZhouU, DeiftZhouS}, we have the estimate
 \begin{equation}\label{R-asymptotic}R(z)=I+O(n^{-1/3}),
\end{equation}  where the error bound is uniform for $z$ in whole complex plane.

\section{Proofs of Theorem \ref{thm:HankelAsy}-\ref{thm:OpAsy}}\label{ProofsofTherorem}

In this section, we will prove the main results on the asymptotics of the Hankel determinants and several  quantities related to the orthogonal polynomials,  including the recurrence coefficients and the leading coefficients. Moreover, we will derive the asymptotics of the coupled Painlev\'e IV system.
\subsection{Proof of Theorem \ref{thm:HankelAsy}: asymptotic of the  Hankel determinants}

\begin{lem}\label{thm:F-asy}
Let
$$s_k=\sqrt{2n}+\frac {t_k}{2n^{1/6}}, \quad k=1,2, $$
and $F(s_1,s_2)$ be the logarithmic derivative of the Hankel determinant defined in  \eqref{def:F}, we have
\begin{equation}\label{F-asy}
   F(s_1,s_2)=\sqrt{2}n^{1/6}H_{\texttt{II}}(t_1; t_2-t_1)+O(n^{-1/6}), \end{equation}
where  $H_{\texttt{II}}(x;s)$ is the Hamiltonian for the coupled Painlev\'e II system as defined in \eqref{def:Hamiltonian-CPII}. The error bound is uniform for $-c_1\leqslant t_1< t_2\leqslant c_2 n^{1/6}$  and $  t_2-t_1\leqslant c_3$
 for any given positive constants $c_k$ , $k=1,2,3$; see also \eqref{def:t}.
\end{lem}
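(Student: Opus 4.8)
The plan is to combine the exact differential identity \eqref{def: DiffId-2}, which gives $F(s_1,s_2)=2(Y_1)_{11}$ where $Y_1$ is the coefficient of $1/z$ in $Y(z)z^{-n\sigma_3}$ at infinity, with the steepest descent analysis of Section~\ref{OPAsy}. Inverting \eqref{Y to T} and using $T=RN$ outside the two discs, one gets, for $w=z/\sqrt{2n}\to\infty$,
\[
Y(z)z^{-n\sigma_3}=(2n)^{\frac{n}{2}\sigma_3}e^{\frac{n}{2}l\sigma_3}\,T(w)\,e^{-n(\frac{l}{2}-g(w))\sigma_3}z^{-n\sigma_3}.
\]
Since $g(w)=\ln w+O(w^{-2})$ — the $w^{-1}$ term vanishes by symmetry of the semicircle — the last two factors combine to $(2n)^{-\frac{n}{2}\sigma_3}(I+O(n^2/z^2))$, whose contribution to the $1/z$-coefficient is trivial on the diagonal; conjugation by the diagonal matrices $(2n)^{\pm\frac{n}{2}\sigma_3},e^{\pm\frac{n}{2}l\sigma_3}$ leaves $(1,1)$-entries unchanged, and the expansion $N(w)=I+N_1/w+\cdots$ of \eqref{N-expression} has $(N_1)_{11}=0$. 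Hence $(Y_1)_{11}=\sqrt{2n}\,(R_1)_{11}$, i.e. $F(s_1,s_2)=2\sqrt{2n}\,(R_1)_{11}$, where $R_1$ is the $1/z$-coefficient of $R$ at infinity.

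Next I would compute $R_1$ from the small-norm theory, $R_1=-\frac{1}{2\pi i}\oint_{\Sigma_R}R_-(\zeta)(J_R(\zeta)-I)\,d\zeta$. On $\partial U(-1,r)$ the Airy parametrix contributes $O(1/n)$, on the remaining arcs $J_R-I$ is exponentially small, and on $\partial U(1,r)$ Proposition~\ref{pro: parametrix} gives $J_R-I=\Delta/n^{1/3}+O(n^{-2/3})$ with $\Delta$ as in \eqref{def:Delta}; together with $R_-=I+O(n^{-1/3})$ this yields $R_1=\frac{1}{n^{1/3}}\operatorname{Res}_{z=\lambda_1}\Delta+O(n^{-2/3})$. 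The key point is that $\Delta$ is meromorphic in $U(1,r)$ with only a simple pole at $\lambda_1$: the jump \eqref{Jump-N} of $N$ across $(-1,\lambda_1)$ changes the sign of both $N(\sigma_3-i\sigma_1)N^{-1}$ and $(f(z)-f(\lambda_1))^{1/2}$, so $\Delta$ has no cut; and a direct calculation with \eqref{N-expression} gives $\big(N(z)(\sigma_3-i\sigma_1)N^{-1}(z)\big)_{11}=\eta(z)^{-2}=(z+1)/(z-\lambda_1)$, whence, using $f(z)-f(\lambda_1)=f'(\lambda_1)(z-\lambda_1)(1+O(z-\lambda_1))$ with $f'(\lambda_1)=2+O(n^{-2/3})$, one finds $\operatorname{Res}_{\lambda_1}\Delta_{11}=\tfrac12 H_{\texttt{II}}(x^{*};s^{*})(1+O(n^{-2/3}))$ with $x^{*}=n^{2/3}f(\lambda_1)$, $s^{*}=n^{2/3}(f(\lambda_2)-f(\lambda_1))$. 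Therefore $(R_1)_{11}=\frac{H_{\texttt{II}}(x^{*};s^{*})}{2n^{1/3}}+O(n^{-2/3})$, so $F(s_1,s_2)=\sqrt{2}\,n^{1/6}H_{\texttt{II}}(x^{*};s^{*})+O(n^{-1/6})$. Finally $f(\lambda_k)=2(\lambda_k-1)+O((\lambda_k-1)^2)$ with $\lambda_k-1=t_k/(2n^{2/3})$ gives $x^{*}=t_1+O(n^{-2/3})$, $s^{*}=t_2-t_1+O(n^{-2/3})$, and since $H_{\texttt{II}}$ depends smoothly on its arguments along the pole-free solution of Proposition~\ref{pro:H} (with $\frac{d}{dx}H_{\texttt{II}}=-(u_1^2+u_2^2)$, \eqref{eq: DHaml}), replacing $x^{*},s^{*}$ by $t_1,t_2-t_1$ costs only $O(n^{-1/2})$ after multiplying by $\sqrt{2}\,n^{1/6}$, which gives \eqref{F-asy}.

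Two points should require genuine care. One is the local bookkeeping near $z=\lambda_1$ — the cancellation of the $N$-jump in $\Delta$, the identity $(N(\sigma_3-i\sigma_1)N^{-1})_{11}=\eta^{-2}$, and the orientation of $\partial U(1,r)$ — since it is precisely this that turns the $1/n^{1/3}$ term of the parametrix matching into exactly $\tfrac12 H_{\texttt{II}}$ with the correct sign. The other, and the main obstacle for the uniform statement, is controlling all error terms uniformly over the range \eqref{def:t}: this rests on the extended matching estimate of Remark~\ref{rem:parameters} — in particular on $H_{\texttt{II}}(x;s)$ being uniformly bounded and exponentially small for large positive $x$, so that $\Delta=O(1)$ uniformly — together with the standard Cauchy estimates for $R$ (and $R'$) following from $\|J_R-I\|_{L^2\cap L^\infty(\Sigma_R)}=O(n^{-1/3})$. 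The remaining steps are routine.
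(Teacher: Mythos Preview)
Your proof is correct and follows a genuinely different route from the paper's. The paper proves the lemma via the \emph{local} differential identity \eqref{def: DiffId-1}: it evaluates $(Y^{-1}Y')_{21}$ at $s_1,s_2$ through the local parametrix representation \eqref{Psi-Y}, obtaining $\frac{1-\omega_1}{2\pi i}(\hat P_1)_{21}+\frac{\omega_1-\omega_2}{2\pi i}(\hat Q_1)_{21}$, and then needs a further computation with the Lax pair \eqref{Lax pair-z} (equations \eqref{hatP0}--\eqref{eq:Q1}) to identify this combination as $H_{\texttt{II}}$. You instead use the \emph{global} identity \eqref{def: DiffId-2} at infinity, reduce to $(R_1)_{11}$ via $(N_1)_{11}=0$ and the vanishing $w^{-1}$ term in $g$, and extract $(R_1)_{11}$ by residue calculus from the matching expansion \eqref{R-jump}--\eqref{def:Delta}. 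Your route is more economical because $H_{\texttt{II}}$ is already built into the prefactor $E(z)$ of the local parametrix via \eqref{E} and \eqref{def:r}, so it falls out of the residue without any Lax-pair algebra; in fact the paper carries out essentially the same residue computation later, in \eqref{R-1}--\eqref{R1-expand}, but only in the proof of Theorem~\ref{thm:CPIVAsy}. The paper's approach, by contrast, makes visible how the two local contributions at $s_1$ and $s_2$ combine to produce the Hamiltonian. One small slip: with the paper's $\eta$ one has $\eta^{-2}=\big((z+1)/(z-\lambda_1)\big)^{1/2}$, not $(z+1)/(z-\lambda_1)$; this does not affect the outcome, since $(f(z)-f(\lambda_1))^{-1/2}\eta^{-2}$ still has a simple pole at $\lambda_1$ with residue $\sqrt{1+\lambda_1}/\sqrt{f'(\lambda_1)}\to 1$, and your final value $\tfrac12 H_{\texttt{II}}$ agrees with \eqref{R1-expand}.
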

\begin{proof}
Tracing back the series of invertible transformations \eqref{Y to T}, \eqref{T-S} and \eqref{S-R}
$$Y\to T\to S\to R, $$
we have
\begin{equation}\label{Psi-Y}
Y_{+}(\sqrt{2n}z)=(2n)^{\frac 12n\sigma_3}e^{\frac 12 nl\sigma_3}R(z)E(z)\Psi_{+}\left (n^{2/3}(f(z)-f(\lambda_1)); x_n,s_n\right )e^{nz^2\sigma_3},  \quad  \lambda_1<z<1+r,
\end{equation} 
where $E(z)$ as defined in \eqref{E} is analytic for $|z-1|<r$. With the parameters specified by \eqref{def:t}, we have
\begin{equation}\label{eq:xExp}
x_n=n^{2/3}f(\lambda_1) = t_1+O(n^{-1/3}),  \quad s_n=n^{2/3}(f(\lambda_2)-f(\lambda_1))=t_2-t_1+O(n^{-1/3}).\end{equation}
Thus, substituting \eqref{Psi-Y} and the estimate \eqref{R-asymptotic}
 into the differential identity \eqref{def: DiffId-1}, we obtain
\begin{align}
    F(s_1,s_2)
    =\frac{1-\omega_1}{\sqrt{2}\pi i}n^{1/6}
    \left(\Psi^{-1}\Psi_{\zeta}
    \right)_{21}(0)
    +\frac{\omega_1-\omega_2}{\sqrt{2}\pi i}n^{1/6}
    \left(\Psi^{-1}\Psi_{\zeta}
    \right)_{21}(s_n)+O(n^{-1/6}),
\end{align}
where the error bound is uniform for $s_1$ and $s_2$ specified by \eqref{def:t}.
 Using the expansions of $\Psi(z)$ near $z=0$ and $z=s$ in \eqref{Psi0} and \eqref{Psis},  we have
\begin{align}\label{eq: F-PQ}
    F(s_1,s_2)
    =\frac{1-\omega_1}{\sqrt{2}\pi i}n^{1/6}
    (\hat{P}_1)_{21}(x_n,s_n)
    +\frac{\omega_1-\omega_2}{\sqrt{2}\pi i}n^{1/6}
    (\hat{Q}_1)_{21}(x_n,s_n)+O(n^{-1/6}),
\end{align}
where   $\hat{P}_1$ and  $\hat{Q}_1$ are defined in \eqref{def: Phat} and \eqref{def: Qhat}, respectively.

Next, we express $\hat{P}_1$ and  $\hat{Q}_1$  in terms of the coupled Painlev\'e II system. Applying the differential equation \eqref{Lax pair-z}, we obtain
\begin{gather}\label{}
\frac{1-\omega_1}{2\pi i}\hat{P}_0\left(\begin{array}{cc} 0 & 1\\ 0 & 0\end{array}\right)\hat{P}_0^{-1}=
\left(\begin{array}{cc} \frac{v_{1x}}{2} & -iv_1\\ -i\frac{v_{1x}^2}{4v_1} & -\frac{v_{1x}}{2}\end{array}\right),\label{hatP0}\\
\hat{P}_1+\frac{1-\omega_1}{2\pi i}\left[\hat{P}_1,\left(\begin{array}{cc} 0 & 1\\ 0 & 0\end{array}\right)\right]
= \hat{P}_0^{-1}
 \left(\begin{array}{cc} -\frac{v_{2x}}{2s} & i+\frac{iv_{2}}{s}\\
-i\left(x+v_1+v_2-\frac{v_{2x}^2}{4v_2s}\right) &\frac{v_{2x}}{2s}\end{array}\right) \hat{P}_0. \label{hatP1}
\end{gather}
Now
$$\hat{P}_0\left(\begin{array}{cc} 0 & 1\\ 0 & 0\end{array}\right)\hat{P}_0^{-1}= \left(
                               \begin{array}{cc}
                                 -(\hat{P}_0)_{11} (\hat{P}_0)_{21}& (\hat{P}_0)_{11}^2 \\
                                 -(\hat{P}_0)_{21}^2 & (\hat{P}_0)_{11} (\hat{P}_0)_{21} \\
                                 \end{array}
                             \right).$$
Then, this, together with \eqref{hatP0},  leads us to
\begin{equation}\label{eq:P0ij}\frac{1-\omega_1}{2\pi i}  (\hat{P}_0)_{11}^2=-iv_1, \quad  \frac{1-\omega_1}{2\pi i}  (\hat{P}_0)_{21}^2=i\frac{v_{1x}^2}{4v_1},
\quad  \frac{1-\omega_1}{2\pi i}  (\hat{P}_0)_{11}(\hat{P}_0)_{21}=-\frac{v_{1x}}{2}.
\end{equation}
From the $(21)$ entry of the matrix equation \eqref{hatP1}, it is seen that
 \begin{equation}\label{eq:P112}
(\hat{P}_1)_{21}=-i\left(x+v_1+v_2-\frac{v_{2x}^2}{4v_2s}\right)(\hat{P_0})_{11}^2
+\frac{v_{2x}}{s}(\hat{P}_0)_{11}(\hat{P}_0)_{21}+i\left(1+\frac{v_2}{s}\right)(\hat{P_0})_{21}^2.
\end{equation}

Substituting \eqref{eq:P0ij} into \eqref{eq:P112}, we obtain
\begin{equation}\label{eq:P1}
\frac{1-\omega_1}{\pi i}(\hat{P}_1)_{21}=-2v_1\left(x+v_1+v_2-\frac{v_{2x}^2}{4v_2s}\right)
-\frac{v_{1x}v_{2x}}{s}+\left(1+\frac{v_2}{s}\right)\frac{v_{1x}^2}{2v_1}.
\end{equation}
Similarly, we have that
\begin{equation}\label{eq:Q0ij}\frac{\omega_1-\omega_2}{2\pi i}  (\hat{Q}_0)_{11}^2=-iv_2, \quad  \frac{\omega_1-\omega_2}{2\pi i}  (\hat{Q}_0)_{21}^2=i\frac{v_{2x}^2}{4v_2},
\end{equation}
and
\begin{equation}\label{eq:Q1}
\frac{\omega_1-\omega_2}{\pi i}(\hat{Q}_1)_{21}=-2v_2\left(x-s+v_1+v_2+\frac{v_{1x}^2}{4v_1s}\right)
+\frac{v_{1x}v_{2x}}{s}-\left(1-\frac{v_1}{s}\right)\frac{v_{2x}^2}{2v_2}.
\end{equation}
Therefore, we obtain from \eqref{eq:CPII}, \eqref{eq:xExp} \eqref{eq: F-PQ}, \eqref{eq:P1} and \eqref{eq:Q1} that
\begin{align}
    F(s_1,s_2)
    &=\frac{1}{\sqrt{2}}n^{1/6}\left(-
    2v_2s_n-2(v_1+v_2)(x_n+v_1+v_2)+\frac{v_{1x}^2}{2v_1}+\frac{v_{2x}^2}{2v_2}\right)+O(n^{-1/6})\\
    &=\sqrt{2}n^{1/6}H_{\texttt{II}}(x_n;s_n)+O(n^{-1/6})\\
    &=\sqrt{2}n^{1/6}H_{\texttt{II}}(t_1; t_2-t_1)+O(n^{-1/6}),
\end{align}
where $H_{\texttt{II}}(x;s)$ is the Hamiltonian for the coupled Painlev\'e II system as defined in \eqref{def:Hamiltonian-CPII}.
This completes the proof Lemma \ref{thm:F-asy}.
\end{proof}

Next we derive the asymptotic expansion for the Hankel determinant $D_n$ defined by \eqref{def: Hankel} when the jump discontinuities of the weight function \eqref{weight} are large enough.
\begin{lem}\label{lem:Hankel-asy-out}
For $s_2>s_1\geqslant\sqrt{2n}+c_0$ and  any given positive constant $c_0$,
we have the asymptotic approximation for the Hankel determinant $D_n(s_1,s_2)=D_n(s_1,s_2;\omega_1,\omega_2)$ defined by \eqref{def: Hankel}
\begin{equation}\label{D-asy}
   D_n(s_1,s_2)=D_n^{\texttt{GUE}}\left(1+O\left(e^{-cn^{1/4}}\right)\right), \end{equation}
where $c$ is some positive constant  and $D_n^{\texttt{GUE}}$, given explicitly in \eqref{def: Hankel-GUE},  is the Hankel determinant associated with the pure Gaussian weight.
\end{lem}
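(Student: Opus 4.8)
The plan is to realise the ratio $D_n(s_1,s_2)/D_n^{\texttt{GUE}}$ as an expectation over the unperturbed Gaussian unitary ensemble and to exploit that, when $s_1\geq\sqrt{2n}+c_0$, essentially no GUE eigenvalue lies beyond $s_1$. Write $w(x;s_1,s_2;\omega_1,\omega_2)=e^{-x^2}\big(1+g(x)\big)$ with $g(x)=(\omega_1-1)\mathbf{1}_{(s_1,s_2)}(x)+(\omega_2-1)\mathbf{1}_{(s_2,\infty)}(x)$, so that $g$ is supported in $[s_1,\infty)$ and $|g|\leq C:=\max\{|\omega_1-1|,|\omega_2-1|\}$. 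Substituting this into the multiple-integral expression \eqref{def: Hankel} and using $Z_n=n!\,D_n^{\texttt{GUE}}$ gives $D_n(s_1,s_2)/D_n^{\texttt{GUE}}=\mathbb{E}\big[\prod_{i=1}^{n}(1+g(\lambda_i))\big]$, the expectation being with respect to the GUE density \eqref{GUE}. Since $\big|\prod_i(1+g(\lambda_i))-1\big|\leq\prod_i(1+|g(\lambda_i)|)-1\leq(1+C)^{N}-1$ with $N:=\#\{i:\lambda_i>s_1\}$, it suffices to show $\mathbb{E}[(1+C)^{N}]-1=O(e^{-cn^{1/4}})$.

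For this I would use the determinantal structure of the GUE. Expanding, $\mathbb{E}[(1+C)^{N}]=\sum_{k\geq0}C^{k}\,\mathbb{E}\big[\binom{N}{k}\big]$, and $\mathbb{E}\big[\binom{N}{k}\big]=\frac1{k!}\int_{(s_1,\infty)^{k}}R_k(x_1,\dots,x_k)\,dx$, where $R_k(x_1,\dots,x_k)=\det[K_n(x_i,x_j)]_{i,j=1}^{k}$ is the $k$-point correlation function and $K_n$ is the correlation kernel in \eqref{Weighted OP kernel}, which is the kernel of an orthogonal projection and hence positive semidefinite. Hadamard's inequality for positive semidefinite matrices gives $R_k(x_1,\dots,x_k)\leq\prod_{i=1}^{k}K_n(x_i,x_i)$, whence $\mathbb{E}\big[\binom{N}{k}\big]\leq m^{k}/k!$ with $m:=\int_{s_1}^{\infty}K_n(x,x)\,dx=\mathbb{E}[N]$. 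Therefore $\mathbb{E}[(1+C)^{N}]\leq e^{Cm}$ and $\big|D_n(s_1,s_2)/D_n^{\texttt{GUE}}-1\big|\leq e^{Cm}-1$. The problem has thus been reduced to the one-point tail estimate $m=O(e^{-cn^{1/4}})$, uniformly in $s_1\geq\sqrt{2n}+c_0$ (note that $s_2$ has dropped out).

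To estimate $m=\int_{s_1}^{\infty}\sum_{k=0}^{n-1}\psi_k(x)^{2}\,dx$, where $\psi_k$ are the Hermite functions, I would use that $\psi_k''+(2k+1-x^{2})\psi_k=0$ and that $\psi_k$ decays in the classically forbidden region $x>\sqrt{2k+1}$. A barrier/WKB comparison (equivalently, the Plancherel--Rotach asymptotics \cite{O}, or the known soft-edge bounds for the GUE one-point function) yields, for $k\leq n-1$ and $x\geq s_1$, an estimate of the type $\psi_k(x)^{2}\leq C\,n^{-1/6}\exp\!\big(-2\int_{\sqrt{2n-1}}^{x}\sqrt{t^{2}-2n+1}\,dt\big)$. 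Since $\int_{\sqrt{2n-1}}^{\sqrt{2n}+c_0}\sqrt{t^{2}-2n+1}\,dt$ is of order $(2n)^{1/4}c_0^{3/2}$ near the turning point, this gives $K_n(s_1,s_1)\leq C\,n^{5/6}e^{-c n^{1/4}}$; and because $x\mapsto K_n(x,x)$ keeps decaying for $x>s_1$ the integral over $(s_1,\infty)$ is controlled by its lower endpoint, so $m\leq C'e^{-cn^{1/4}}$, uniformly (indeed $m$ is monotone decreasing in $s_1$, so the extreme case is $s_1=\sqrt{2n}+c_0$). Combining with the previous paragraph, $\big|D_n(s_1,s_2)/D_n^{\texttt{GUE}}-1\big|\leq e^{Cm}-1\leq 2Cm\leq C''e^{-cn^{1/4}}$ for $n$ large, which is \eqref{D-asy}.

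The only genuinely technical step is the uniform soft-edge estimate of the last paragraph --- in particular getting the $n^{1/4}$ in the exponent and its uniformity over the whole range $s_1\geq\sqrt{2n}+c_0$ (for $s_1$ far beyond the edge the bound is of course much stronger, so the worst case really is $s_1$ just above $\sqrt{2n}$). Everything else --- the expectation identity and the Hadamard bound --- is routine. One could alternatively extract this edge estimate from a short Riemann--Hilbert analysis, since after the transformations $Y\to T\to S$ all jump matrices off the band $[-1,\lambda_1]$ (with $\lambda_1=s_1/\sqrt{2n}>1$) are $I+O(e^{-2n\phi(\lambda_1)})$ and $n\phi(\lambda_1)$ is of order $n^{1/4}$; however, for $s_1$ much larger than $\sqrt{2n}$ the endpoint of the equilibrium band must be moved, so the elementary argument above is the more uniform route.
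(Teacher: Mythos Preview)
Your proposal is correct but follows a genuinely different route from the paper's.

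The paper argues entirely within the Riemann--Hilbert framework already set up in Section~\ref{OPAsy}: when $\lambda_1=s_1/\sqrt{2n}\geq 1+c_0/\sqrt{2n}$, the jump matrices of $S$ on $(\lambda_1,\infty)$ are $I+O(e^{-2n\phi(\lambda_1)})$ with $n\phi(\lambda_1)\gtrsim n^{1/4}$, so $S$ differs from the pure-Gaussian $S_0$ by an exponentially small factor. Via the differential identity \eqref{def: DiffId-2} this gives $F(s_1,s_2)=2p_n+O(e^{-2n\phi(\lambda_1)})=O(e^{-2n\phi(\lambda_1)})$ (since the Hermite sub-leading coefficient $p_n$ vanishes), which is then integrated from $s_1$ to $+\infty$ to recover \eqref{D-asy}.

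Your argument bypasses the RH machinery and the differential identity altogether: you realise $D_n/D_n^{\texttt{GUE}}$ as a GUE expectation, control it by the Hadamard bound for the determinantal correlation functions, and reduce the problem to the single one-point tail estimate $m=\int_{s_1}^{\infty}K_n(x,x)\,dx=O(e^{-cn^{1/4}})$, obtained from Plancherel--Rotach/WKB asymptotics of the Hermite functions in the forbidden region. This is more elementary and, as you note, is automatically uniform over the whole range $s_1\geq\sqrt{2n}+c_0$ by monotonicity of $m$ in $s_1$. The paper's approach has the virtue of reusing the transformations $Y\to T\to S$ already in place, so no separate Hermite asymptotics are needed; yours is self-contained and avoids the small-norm RH argument. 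One minor point: in your WKB bound you state a uniform estimate $\psi_k(x)^2\leq Cn^{-1/6}\exp(-2\int_{\sqrt{2n-1}}^{x}\!\sqrt{t^2-2n+1}\,dt)$ for all $k\leq n-1$; strictly speaking the worst case $k=n-1$ controls the sum only after a short argument (or one uses the Christoffel--Darboux formula to express $K_n(x,x)$ via $\psi_{n-1},\psi_n$ alone), but this is routine and does not affect the outcome.
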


\begin{proof}

For $s_2>s_1\geqslant\sqrt{2n}+c_0$, we have $\lambda_2>\lambda_1\geqslant 1+\frac{c_0}{\sqrt{2n}} >1$.
On account of  \eqref{f},  there is some constant $c>0$ such that
$$n\phi(\lambda)>cn^{1/4} $$
for  $\lambda>\lambda_1$.
Thus, the jump matrices $J_S(z)$ defined in \eqref{def:SJump-1} tend to the identity matrix exponentially fast for
$z\in (\lambda_1, \lambda_2) \cup ( \lambda_2, +\infty)$.
Therefore, we have
\begin{equation}S(z)=(I+O(e^{-2n\phi(\lambda_1)})S_0(z), \end{equation}
where $S_0(z)$ is solution to the  RH problem for $S$ when the parameters $\omega_1=\omega_2=1$ and $\lambda_1=1$ therein.
Tracing back the sequence of transformations $Y\to T\to S$, given in  \eqref{Y to T}, \eqref{T-S}, we have
\begin{equation}\label{Y-est}
Y(\sqrt{2n}z)=(2n)^{\frac 12n\sigma_3}e^{\frac 12 nl\sigma_3}(I+O(e^{-2n\phi(\lambda_1)})S_0(z)e^{ng(z)\sigma_3-\frac 12 nl \sigma_3},
\end{equation}
where $e^{ng(z)\sigma_3}=(I+
O\left (\frac 1 {z^2}\right ))z^{n\sigma_3}$. In view of \eqref{Y} and the differential identity \eqref{def: DiffId-2}, we obtain
 \begin{equation}
    F(s_1,s_2)=2p_n+O\left(e^{-2n\phi(\lambda_1)}\right)=O\left(e^{-2n\phi(\lambda_1)}\right),
\end{equation}
where $p_n=0$ is the sub-leading coefficient of the monic Hermite polynomial of degree $n$.
We  integrate  on both sides of the above equation and obtain
 \begin{equation}\label{eq:logD-out}
\ln D_n(s_1,s_2)-\ln D_n(s_1+L,s_2+L)=\int_{s_1}^{L+s_1}O\left(e^{-2n\phi(\lambda_1)}\right)d\tau.
\end{equation}
Let $L\to +\infty$, we get \eqref{D-asy} and complete the proof of Lemma \ref{lem:Hankel-asy-out}.
 \end{proof}

 Now, we are ready to prove Theorem \ref{thm:HankelAsy}.
 Integrating on both sides of the  equation \eqref{F-asy}, we obtain for some positive constant $c_0$ that
\begin{equation}\label{eq:log-D-n}
\ln D_n(s_1,s_2)-\ln D_n(s_1+c_0,s_2+c_0)=-\int_{t_1}^{t_0}H_{\texttt{II}}(\tau; t_2-t_1)d\tau+O(n^{-1/6}),
\end{equation}
where 
 $$s_1=\sqrt{2n}+\frac {t_1}{\sqrt{2}n^{1/6}}, \quad s_2=\sqrt{2n}+\frac {t_2}{\sqrt{2}n^{1/6}}$$
for $t_k$, $k=1,2$, in any compact subset of $\mathbb{R}$ and
\begin{equation}\label{eq:t-0}t_0=n^{2/3}f(\frac{s_1+c_0}{\sqrt{2n}})\sim \sqrt{2}c_0n^{1/6} .
\end{equation}
In view of \eqref{D-asy}, we have
 \begin{equation}\label{D-asy-c0}
 \ln D_n(s_1+c_0,s_2+c_0)= D_n^{\texttt{GUE}}\left(1+O\left(e^{-cn^{1/4}}\right)\right), \end{equation}
where $c$ is some positive constant  and $D_n^{\texttt{GUE}}$ is the Hankel determinant associated with the Gaussian weight; see \eqref{def: Hankel-GUE}.
Recalling \eqref{eq: DHaml}, we obtain from an integration by parts that
\begin{equation}\label{eq:int-H}
\int_{t_1}^{t_0}H_{\texttt{II}}(\tau; t_2-t_1)d\tau=\int_{t_1}^{t_0}(\tau-t_1)\left(u_1(\tau; t_2-t_1)^2+u_2(\tau;t_2-t_1)^2\right)d\tau,
\end{equation}
where $u_1(x)$ and $u_2(x)$ are solutions to the coupled nonlinear differential equations  \eqref{int-equation u} subject to the boundary conditions  \eqref{eq:u-asy} as $x\to+\infty$.
On account of  \eqref{eq:u-asy} and  \eqref{eq:t-0}, we have
 \begin{equation}\label{int-rem}
 \int_{t_0}^{+\infty}(\tau-t_1)\left(u_1(\tau; t_2-t_1)^2+u_2^2(\tau;t_2-t_1)\right)d\tau
 =O\left(e^{-cn^{1/4}}\right),\end{equation}
 for some constant $c>0$.  Inserting \eqref{D-asy-c0} and \eqref{int-rem} into \eqref{eq:log-D-n}, we obtain  \eqref{thm: HAsy}.  This completes the proof of   Theorem \ref{thm:HankelAsy}.

\subsection{Proof of theorem \ref{thm:CPIVAsy}: asymptotics of the coupled Painlev\'e IV } 


From this section, the parameters in \eqref{weight} are defined by
$s_1=\sqrt{2n}\lambda_1=\sqrt{2n}+\frac {t_1}{\sqrt{2}n^{1/6}}$ and $ s_2=\sqrt{2n}\lambda_2=\sqrt{2n}+\frac {t_2}{\sqrt{2}n^{1/6}}, $
with $t_1$ and $t_2$  in any compact subset of $\mathbb{R}$.
Tracing back the sequence of transformations $Y\to T\to S\to R$, given in  \eqref{Y to T}, \eqref{T-S}  and \eqref{S-R}, we have the expression  for large $z$:
\begin{equation}\label{Y tracing back}
Y(\sqrt{2n}z)=(2n)^{\frac 12n\sigma_3}e^{\frac 12 nl\sigma_3}R(z)N(z)e^{ng(z)\sigma_3-\frac 12 nl \sigma_3},
\end{equation}
where $l=-1-2\ln2$.
From the definition of $g(z)$ in \eqref{g}, it is seen that
$$e^{ng(z)\sigma_3}z^{-n\sigma_3}=I+
O\left (\frac 1 {z^2}\right ), \quad z\to \infty.$$
By the expression of $N(z)$ in \eqref{N-expression}, we have the expansion
\begin{equation}\label{N-expand}
N(z)=I+\frac{N_1}{z}+O\left (\frac 1 {z^2}\right ),\quad  z\to \infty, \end{equation}
where $N_1=-\frac{1}{4}(1+\lambda_1)\sigma_2$.
It  follows from the expansion of the jump for $R(z)$ in \eqref{R-jump} and \eqref{def:Delta} that
\begin{equation}
R(z) =I+\frac{R^{(1)}(z)}{n^{1/3}}+O\left
(\frac 1 {n^{2/3}}\right ),\quad n\rightarrow\infty.
\end{equation}
Here,   $R^{(1)}$  satisfy the jump relation
\begin{equation}\label{}
R^{(1)}_+(z)-R^{(1)}_-(z)=\Delta(z),
\end{equation}
where  $\Delta(z)$ is  given in \eqref{def:Delta}  and $R^{(1)}(z)=O(1/z)$ for $z$ large.
Applying Cauchy's theorem, it is seen that
\begin{equation}\label{R-1}
 R^{(1)}(z)=\left\{\begin{array}{ll}
                   \frac{\sqrt{1+\lambda_1}H_{\texttt{II}}(n^{2/3}f(\lambda_1);n^{2/3}(f(\lambda_2)-f(\lambda_1)))}{2 \sqrt{f'(\lambda_1)}(z-\lambda_1)}(\sigma_3-i\sigma_1)-\Delta(z), &z\in U(1,r),\\[.3cm]
                    \frac{\sqrt{1+\lambda_1}H_{\texttt{II}}(n^{2/3}f(\lambda_1);n^{2/3}(f(\lambda_2)-f(\lambda_1)))}{2 \sqrt{f'(\lambda_1)} (z-\lambda_1)}(\sigma_3-i\sigma_1), &z\not\in \overline{U(1,r)}.
                  \end{array}
\right.
\end{equation}
Thus, we get from the expression the following expansion as $z\to\infty$
\begin{equation}\label{R-expand}
R(z)=I+\frac{R_1}{z}+O\left (\frac 1 {z^2}\right ),
\end{equation}
and
\begin{align}\label{R1-expand}
R_1&= \frac{\sqrt{1+\lambda_1}}{2 \sqrt{f'(\lambda_1)}n^{1/3} }H_{\texttt{II}}(n^{2/3}f(\lambda_1); n^{2/3}(f(\lambda_2)-f(\lambda_1)))(\sigma_3-i\sigma_1)\nonumber\\
&=\frac{1}{2 n^{1/3}}H_{\texttt{II}}(t_1; t_2-t_1)(\sigma_3-i\sigma_1) +O(n^{-2/3}),
\end{align}
where use is also made of \eqref{f}.
In view of \eqref{N-expression},  \eqref{f} and  \eqref{def:Delta}, we have the following expansion  as $z\to \lambda_1$
\begin{equation}\label{R-1-lambda}
R^{(1)}(z)=-\frac{1}{10}H_{\texttt{II}}(t_1; t_2-t_1)(\sigma_3-i\sigma_1)+O(n^{-2/3})+O(z-\lambda_1).
\end{equation}
Substituting  \eqref{N-expand}, \eqref{R-expand}  and
\eqref{R1-expand} into  \eqref{Y tracing back} yields
\begin{align}
Y_1&=\sqrt{2n}(2n)^{\frac 12n\sigma_3}e^{\frac 12 n l \sigma_3}(R_1+N_1)e^{-\frac 12 n l \sigma_3}(2n)^{-\frac 12n\sigma_3}\\
&=  \sqrt{2n}(2n)^{\frac 12n\sigma_3}e^{\frac 12 n l \sigma_3}\left(-\frac{1}{2}\sigma_2+\frac{H_{\texttt{II}}(t_1;t_2-t_1)}{2n^{1/3} }(\sigma_3-i\sigma_1) +O(n^{-2/3})\right)    e^{-\frac 12 n l \sigma_3}(2n)^{-\frac 12n\sigma_3}. \label{Y1}\end{align}
This, together with  the relation \eqref{def:y}, implies that
\begin{align}\label{asymp-y}
y(x;s)&=-2(Y_1)_{21}e^{-x^2}\nonumber\\
&=i(2n)^{-n+\frac 12}e^{-x^2-nl}\left(1+\frac{H_{II}(t_1; t_2-t_1)}{n^{1/3}}
+O(n^{-2/3})\right),\end{align}
where
$x=\frac{s_1+s_2}{2}=\sqrt{2n}+\frac{t_1+t_2}{2\sqrt{2}n^{1/6}}$ and $ s=\frac{s_2-s_1}{2}=\frac{t_2-t_1}{2\sqrt{2}n^{1/6}}$.
Recalling  $l=-1-2\ln2$, we obtain the  asymptotic expansion of $y(x;s)$ as given in \eqref{asy-cpiv-y}.

Next, we consider the asymptotics of $a_k(x;s)$ and $a_k(x;s)$ for $k=1,2$.
It follows from  the mastar equation of the  Lax pair \eqref{def: Lax pair}  that
\begin{gather}
   a_1(x;s)=\frac 1{ y(x;s)}\lim_{z\rightarrow -s}(z+s)(\Phi_z\Phi^{-1})_{12},\label{a-1-phi}\\
  a_2(x;s)=\frac 1{ y(x;s)}\lim_{z\rightarrow s}(z-s)(\Phi_z\Phi^{-1})_{12},\label{a-2-phi}\\
   b_1(x;s)=\frac 1{ a_1(x;s)}\lim_{z\rightarrow -s}(z+s)(\Phi_z\Phi^{-1})_{11},\label{b-1-phi}\\
    b_2(x;s)=\frac 1{ a_2(x;s)}\lim_{z\rightarrow s}(z-s)(\Phi_z\Phi^{-1})_{11},\label{b-2-phi}
\end{gather}
where the subscript  $z$ denotes  the derivative with respect to $z$.
It is seen from \eqref{eq: Phi} that
\begin{equation}\label{phi-y-asy}
\Phi_z\Phi^{-1}=\sigma_1e^{\frac{x^2}{2}\sigma_3}[Y_z(z+x)Y^{-1}(z+x)
+Y(z+x)(-(z+x)\sigma_3)Y^{-1}(z+x)]
e^{-\frac{x^2}{2}\sigma_3}\sigma_1.
\end{equation}
According to \eqref{a-1-phi} and \eqref{phi-y-asy} and in view of  the fact that $Y(z)$ has at most logarithm singularity at $s_1$, we have
\begin{align}\label{a-1-asymp-o}
  a_1(x;s)&=\frac 1{ y(x;s)}e^{-x^2}\lim_{z\rightarrow -s}(z+s)(Y_z(z+x)Y^{-1}(z+x))_{21},\nonumber\\
  &=\frac 1{ y(x;s)}e^{-x^2}\lim_{z\rightarrow s_1}(z-s_1)(Y_z(z)Y^{-1}(z))_{21},\nonumber\\
  &=\frac 1{ y(x;s)}e^{-x^2}\lim_{z\rightarrow\lambda_1}\sqrt{2n}(z-\lambda_1)
  (Y_z(\sqrt{2n}z)Y^{-1}(\sqrt{2n}z))_{21}.
\end{align}
Inserting   \eqref{Psi-Y} into \eqref{a-1-asymp-o}, it is seen that
\begin{equation}\label{a-1-asymp}
  a_1(x;s)=\frac {(2n)^{-n}}{ y(x;s)}e^{-x^2-nl} \left(R(\lambda_1)E(\lambda_1)\left(\lim_{\zeta\rightarrow 0}\zeta
\Psi_{\zeta}(\zeta) \Psi^{-1}(\zeta)\right)E^{-1}(\lambda_1)R^{-1}(\lambda_1) \right)_{21},
\end{equation}
where use is made of the fact  that $E(z)$ and $R(z)$ are  analytic at $z=\lambda_1$.
It follows from the behavior of $\Psi(z)$ near $z=\lambda_1$ as given in  \eqref{Psi0} that
\begin{equation}\label{eq: Psi-P0}
  \lim_{\zeta\rightarrow 0}\zeta
\Psi_{\zeta}(\zeta) \Psi^{-1}(\zeta)=\frac{1-\omega_1}{2\pi i}\hat{P}_0\left(\begin{array}{cc} 0 & 1\\0 & 0\end{array}\right)\hat{P}_0^{-1}.\end{equation}
From the expression \eqref{E}, we get  for $k=1,2$,
\begin{equation}\label{E-lambda}
E(\lambda_k)=\frac{1}{\sqrt{2}}(I-i\sigma_1)
n^{\sigma_3/6}2^{\sigma_3/2}\left( \begin{array}{cc} 1 & 0\\ -iH_{\texttt{II}}(t_1; t_2-t_1)  & 1 \end{array}\right)(I+O(n^{-2/3})).
\end{equation}
Thus, we obtain from \eqref{R-asymptotic}, \eqref{a-1-asymp}-\eqref{E-lambda} that
\begin{equation}\label{}
  a_1(x;s)=\frac {(1-\omega_1)(2n)^{-n}}{ 2\pi i y(x;s)}e^{-x^2-nl}\nonumber\\
  \left((\hat{P}_0)_{11}^2n^{1/3}+i(\hat{P}_0)_{11}(\hat{P}_0)_{21}+H_{\texttt{II}}(t_1; t_2-t_1) (\hat{P}_0)_{11}^2
 +O(n^{-1/3})\right).
\end{equation}
Using \eqref{eq:P0ij} and \eqref{asymp-y}, we obtain the asymptotic approximation of $a_1(x;s)$ as stated  in \eqref{asy-cpiv-a-1}.

Similarly,   from   \eqref{Psis}, \eqref{parametrix} and  \eqref{a-2-phi},   we have
\begin{align}\label{a-2-asymp-1}
  a_2(x;s)
  &=\frac {(2n)^{-n}}{ y(x;s)}e^{-x^2-nl}\left(R(\lambda_2)E(\lambda_2)\frac{\omega_1-\omega_2}{2\pi i}\hat{Q}_0\left(\begin{array}{cc} 0 & 1\\0 & 0\end{array}\right)\hat{Q}_0^{-1}E^{-1}(\lambda_2)R^{-1}(\lambda_2) \right)_{21}.
\end{align}
Substituting \eqref{R-asymptotic},  \eqref{eq:Q0ij},  \eqref{asymp-y} and  \eqref{E-lambda}  into \eqref{a-2-asymp-1}, we obtain the asymptotic expansion of $a_2(x;s)$  as given in \eqref{asy-cpiv-a-2}.
In view of  \eqref{b-1-phi} and \eqref{b-2-phi}, we obtain  the asymptotics of $b_1(x;s)$ and $b_2(x;s)$ by considering the $(2,2)$ entry of the matrices in \eqref{a-1-asymp} and \eqref{a-2-asymp-1}:
\begin{align}\label{}
  b_1(x;s)
  &=\frac {(1-\omega_1)}{ 2\pi i a_1(x;s)}\left(-i(\hat{P}_0)_{11}^2n^{1/3}
 +O(n^{-1/3})\right)\nonumber\\
  &=\sqrt{2n}\left(1-\frac{v_{1x}(t_1; t_2-t_1)}{2v_{1}(t_1; t_2-t_1)n^{1/3}}+O(n^{-2/3})\right),\quad n\rightarrow\infty,
\end{align}
and
\begin{align}\label{}
  b_2(x;s)
   &=\frac {(\omega_1-\omega_2)}{ 2\pi i a_2(x;s)}\left(-i(\hat{Q}_0)_{11}^2n^{1/3}
 +O(n^{-1/3})\right)\nonumber\\
  &=\sqrt{2n}\left(1-\frac{v_{2x}(t_1; t_2-t_1)}{2v_{2}(t_1; t_2-t_1)n^{1/3}}+O(n^{-2/3})\right),\quad n\rightarrow\infty,
\end{align}
where
$x=\frac{s_1+s_2}{2}=\sqrt{2n}+\frac{t_1+t_2}{2\sqrt{2}n^{1/6}}$ and $ s=\frac{s_2-s_1}{2}=\frac{t_2-t_1}{2\sqrt{2}n^{1/6}}$.
This completes the proof of Theorem  \ref{thm:CPIVAsy}.

\subsection{Proof of Theorem \ref{thm:OpAsy}: asymptotics of the orthogonal polynomials } 

From \eqref{asy-cpiv-a-1}-\eqref{asy-cpiv-b-2}, we have
\begin{equation}\label{eq:abb1}
a_1(x;s)  b_1(x;s)^2=-\sqrt{2}n^{5/6}\left(v_1(t_1; t_2-t_1)-\frac{1}{2}v_{1x}(t_1; t_2-t_1)n^{-1/3}+O(n^{-2/3})\right),
\end{equation}
\begin{equation}\label{eq:abb2}
a_2(x;s)  b_2(x;s)^2=-\sqrt{2}n^{5/6}\left(v_2(t_1; t_2-t_1)-\frac{1}{2}v_{2x}(t_1; t_2-t_1)n^{-1/3}+O(n^{-2/3})\right),
\end{equation}
\begin{equation}\label{eq:ab}
a_1(x;s)  b_1(x;s)+a_2(x;s)  b_2(x;s)=-n^{1/3}\left((v_1(t_1; t_2-t_1)+v_2(t_1; t_2-t_1))+O(n^{-2/3})\right).
\end{equation}
Substituting \eqref{eq:abb1}-\eqref{eq:ab} into \eqref{thm:alpha}, \eqref{thm: beta},  \eqref{thm:pns1} and \eqref{thm:pns2}, we obtain the
asymptotics of the recurrence coefficients and $\pi_n(s_k)$, $k=1,2$,  as given in \eqref{an-asymptotic-expansion}, \eqref{bn-asymptotic-expansion}, \eqref{pns1-asymptotic-expansion} and \eqref{pns2-asymptotic-expansion}, respectively.
 In view of  \eqref{recurrence coefficients and Y},   \eqref{N-expand},
\eqref{R1-expand} and \eqref{Y1}, we derive the asymptotic expansion for the leading coefficient $\gamma_{n-1}$ of the orthonormal polynomial of degree $n-1$:
\begin{align}\label{}
\gamma_{n-1}&=\left(-\frac{1}{2\pi i}(Y_1)_{21}\right)^{1/2}\nonumber\\
&=\left(\frac{\sqrt{2n}(2n)^{-n}e^{-nl}(R_1+N_1)_{21}}{-2\pi i}\right)^{1/2}\nonumber\\
&=\frac{2^{\frac n2-\frac 34}n^{\frac 14-\frac n2}e^{\frac n2}}{\sqrt{\pi}}\left(1+\frac{H_{\texttt{II}}(t_1; t_2-t_1))}{2n^{1/3}}+O(n^{-2/3})\right),\quad n\rightarrow\infty.
\end{align}
Thus, we complete the proof of Theorem \ref{thm:OpAsy}.

\section*{Acknowledgements}
The authors are grateful to Dan Dai for useful comments. Xiao-Bo Wu was partially supported by National Natural Science Foundation
of China under grant number 11801376 and Science Foundation of Education Department of Jiangxi Province  under grant number GJJ170937. Shuai-Xia Xu was partially supported by  National Natural Science Foundation
of China under grant numbers 11971492, 11571376 and 11201493.

\end{document}